\newtheorem{prethm}{{\bf Theorem}}
\newenvironment{thm}{\begin{prethm}{\hspace{-0.5
               em}{\bf.}}}{\end{prethm}}
\newtheorem{prepro}[prethm]{Proposition}
\newenvironment{pro}{\begin{prepro}{\hspace{-0.5
               em}{\bf.}}}{\end{prepro}}
\newtheorem{prelem}[prethm]{Lemma}
\newenvironment{lem}{\begin{prelem}{\hspace{-0.5
               em}{\bf.}}}{\end{prelem}}
\newtheorem{precor}[prethm]{Corollary}
\newtheorem{prerem}[prethm]{{\bf Remark}}
\newenvironment{rem}{\begin{prerem}\em{\hspace{-0.5
              em}{\bf.}}}{\end{prerem}}
\newtheorem{preexample}{{\bf Example}}
\newtheorem{preproof}{{\bf Proof.}}
\newenvironment{proof}[1]{\begin{preproof}{\rm
               #1}\hfill{$\Box$}}{\end{preproof}}
\newcommand{\noi}{\noindent}
\newcommand{\te}{\theta}
\newcommand{\x}{{\bf x}}
\newcommand{\w}{{\bf w}}
\newcommand{\g}{\geqslant}
\newcommand{\lee}{\leqslant}
\newcommand{\p}{{\bf p}}
\newcommand{\e}{{\bf e}}
\newcommand{\B}{{\cal B}}
\newcommand{\C}{{\cal C}}
\newcommand{\D}{{\cal D}}
\newcommand{\la}{\lambda}
\newcommand{\al}{\alpha}
\newcommand{\be}{\beta}
\newcommand{\vf}{\varphi}
\newcommand{\tr}{{\rm tr}}
\DeclareMathAlphabet{\mathpzc}{OT1}{pzc}{m}{it}
\title{On optimality of designs with three distinct eigenvalues}
\author{M.R. Faghihi$^1$\quad E. Ghorbani$^{2,3,}$\thanks{Corresponding author, email: {\tt e\_ghorbani@ipm.ir}}
\quad G.B. Khosrovshahi$^3$\quad  S. Tat$^1$\\[.3cm]
{\small $^1$Department of Statistics, Shahid Beheshti University, Tehran, Iran} \\
{\small $^2$Department of Mathematics, K.N. Toosi University of Technology,  }\\
{\small P.O. Box 16315-1618, Tehran, Iran}  \\
{\small $^3$School of Mathematics, Institute for Research in Fundamental Sciences (IPM),}\\
{\small P.O. Box 19395-5746, Tehran, Iran }}
\begin{document}
\maketitle
\begin{abstract}

Let $\D_{v,b,k}$
denote the family of all connected block designs
with $v$ treatments and $b$ blocks of size $k$. Let $d\in\D_{v,b,k}$.
The  replication of a treatment is the number of times it appears in the blocks of $d$.
 The matrix $C(d)=R(d)-\frac{1}{k}N(d)N(d)^\top$ is called the information matrix of $d$ where $N(d)$ is the incidence matrix of $d$ and $R(d)$ is a diagonal matrix of the replications.
Since $d$ is connected, $C(d)$ has $v-1$ nonzero eigenvalues $\mu_1(d),\ldots,\mu_{v-1}(d)$.
Let $\D$ be the class of all binary designs of $\D_{v,b,k}$.
We prove that if  there is a design $d^*\in\D$ such that (i) $C(d^*)$ has three distinct eigenvalues, (ii) $d^*$ minimizes trace of $C(d)^2$ over $d\in\D$,  (iii) $d^*$ maximizes the smallest nonzero eigenvalue and the product of the nonzero eigenvalues of $C(d)$ over $d\in\D$,
then for all $p>0$, $d^*$ minimizes $\left(\sum_{i=1}^{v-1}\mu_i(d)^{-p}\right)^{1/p}$ over $d\in\D$.
In the context of optimal design theory, this means that if there is a design $d^*\in\D$ such that its information matrix has three distinct eigenvalues satisfying the condition (ii) above and that $d^*$ is E- and D-optimal in $\D$,
then $d^*$ is  $\Phi_p$-optimal in $\D$ for all $p>0$. As an application, we demonstrate the $\Phi_p$-optimality of certain group divisible designs.
 Our proof is based on the method of KKT conditions in nonlinear programming.

\vspace{3mm}
\noindent {\em AMS Classification}: 05C50, 62K05, 90C30 \\
\noindent{\em Keywords}: Optimal designs, Designs with three eigenvalues,  D-optimal, E-optimal, $\Phi_p$-optimal,  KKT conditions, Group divisible designs
\end{abstract}

\section{Introduction}

For a statistician, a design is a much more general structure than what it means to a combinatorialist.
What statisticians consider a design is in fact a more general structure than a hypergraph; the blocks may contain repeated treatments. Statisticians use designs as experimenting schemes
 and they need to  decide which one is better than the other in some sense.
Their criterion for this is the general principle that  a better design has a smaller variance of estimators \cite{bc}. A design which is the best in this sense is called `optimal'.
Usually, statisticians consider three major criteria for optimality: A-optimality, D-optimality, and E-optimality. (The letters A, D, and E  stand for `average',  `determinant', and `extreme'.)
These criteria can be stated in terms of the eigenvalues of certain matrices associated to designs which is discussed in the remaining parts of this introductory remarks.

 We consider designs in statistical sense which are pairs $(X,\B)$ where $X$ is a $v$-set whose elements are called {\em treatments} and $\B$ is a collection of lists (called {\em blocks}) each consists of $k$ treatments.
A design is said to be {\em connected} if for every pair of treatments
it is possible to pass from one to the other through a chain of treatments such that any two consecutive treatments in the chain appear in a common block.
The set of all connected designs with $v$ treatments, $b$ blocks and block size $k$ is denoted by $\D_{v,b,k}$.
Since the blocks of a design are defined as lists, they may contain repeated elements. If the blocks are subsets of $X$, i.e. have no repeated elements, then the design is called {\em binary}.
Let $d\in\D_{v,b,k}$. Let $N(d)$ be the $v \times b$ {\em incidence matrix} whose $(i,j)$ entry is the number of times that treatment $i$ occurs in block $j$.
Thus the
column sums of $N(d)$ are all equal to $k$, the block sizes while the sum $r_i$ of the $i$-th row is the number of times which treatment $i$
occurs overall which is the {\em replication} of $i$. If $d$ binary, then $N(d)$ is a $(0,1)$-matrix.
A design is called {\em equireplicate} if all the treatments have equal replications.
The {\em concurrence matrix} of $d$ is the $v\times v$ matrix $S(d)=N(d)N(d)^\top$.
The diagonal matrix whose diagonal entries are the replication numbers of treatments is denoted by $R(d)$.
The matrix $C(d)=R(d)-\frac{1}{k}S(d)$ is called the {\em information matrix} of $d$.
It is well known that $C(d)$ is a positive semidefinite matrix and since $d$ is connected, $C(d)$ has exactly one eigenvalue zero.
Let $\mu_1(d)\g\cdots\g\mu_{v-1}(d)$ be the nonzero eigenvalues of $C(d)$ which we assume throughout  that are ordered decreasingly.
The multiset of nonzero eigenvalues of $C(d)$ is called the {\em spectrum} of $C(d)$.
If $\mu_1>\cdots>\mu_s$ are distinct nonzero eigenvalues of $C(d)$ with multiplicities $t_1,\ldots,t_s$, respectively, we use the notation $\{\mu_1^{t_1},\ldots,\mu_s^{t_s}\}$ for the spectrum of $C(d)$.

Given a class of designs, a design is said to be {\em A-optimal} if it maximizes the harmonic mean of $\mu_1,\ldots,\mu_{v-1}$ in that class.
A design is {\em D-optimal} if it maximizes the geometric mean of $\mu_1,\ldots,\mu_{v-1}$. A design is said to be {\em E-optimal}
if it maximizes the minimum of the nonzero eigenvalues of $C(d)$.
The eigenvalue optimality criteria are generalized by Kiefer \cite{k} to a much more general criterion called  {\em $\Phi_p$-optimality}.
For any $p>0$, a design is $\Phi_p$-optimal if it minimizes
$$\left(\frac{\sum_{i=1}^{v-1}\mu_i^{-p}}{v-1}\right)^\frac{1}{p}.$$
A-optimality corresponds to $p = 1$; the limit as $p\to 0$ gives D-optimality; the limit
as $p\to\infty$ gives E-optimality.
Cheng \cite{ch78} further  generalized the notion of optimality as follows.  Let $a$ be a large enough positive number and $f$ be a real valued function defined on the interval $(0,a)$.
Suppose that $f$ satisfies the conditions (i) $\lim_{x\to0^+}f(x)=\infty$, (ii) $f'<0$, (iii) $f''>0$, and (iv) $f'''<0$.
Then a design $d^*$ is called {\em type 1 optimal} if for any $d$ with the same number of treatments and blocks as  $d^*$ and for all functions $f$ satisfying the above properties, we have
$$\sum_{i=1}^{v-1}f(\mu_i(d^*))\lee\sum_{i=1}^{v-1}f(\mu_i(d)).$$
The specific functions $f(x)=x^{-p}$ and $f(x)=-\ln x$ give $\Phi_p$-optimality and $D$-optimality, respectively.

The notion of universal optimality introduced by Kiefer \cite{k} helps
in unifying the various optimality criteria. Let $M_v$ be the set of all $v \times v$
symmetric matrices with zero row  and column sums. Consider a
function $\Phi:M_v\to\mathbb{R}$  such that
\begin{itemize}
  \item[\rm(i)] $\Phi$ is convex,
  \item[\rm(ii)] $\Phi(bC)$ is a nonincreasing function of $b\g0$ for any $C\in M_v$, and
  \item[\rm(iii)] $\Phi$ is invariant under each simultaneous permutation of rows and
columns.
\end{itemize}
A design $d^*$ is said to be {\em universally
optimal} over a class of competing designs $\D$  if $d^*\in\D$ and for every function $\Phi$ satisfying the above conditions $\Phi(C(d^*))\lee\Phi(C(d))$ for any $d\in\D$. It can be shown that a design that is universally optimal is also
A-, D- and E-optimal.

The theory of optimal designs is discussed in details in the recent survey \cite{bc}.

In this paper we are interested in the optimality of designs with three distinct eigenvalues, that is designs $d$ for which the information matrix $C(d)$ has three distinct eigenvalues. For equireplicate designs, this is equivalent to say that the concurrence matrix of $d$ has three distinct eigenvalues. The (connected) designs with three distinct eigenvalues are
called {\em connected designs with second-order balance} in the statistical literature.
R.A. Bailey (see \cite{cam}) raised the question that which designs have three eigenvalues.
More specific, it was asked for which  equireplicate designs $d$ does the concurrence matrix $S(d)$ have three distinct
eigenvalues. This was partially answered in \cite{ds,ds2}.
This class of designs include {\em partial geometric designs}. A partial geometric design
is defined as a binary equireplicate connected design whose concurrence
matrix is a singular matrix with at most three distinct eigenvalues, see \cite{bbs,bss}.
 The optimality of designs with few eigenvalues has captured the attention of many workers in the field.

The following result due to Kiefer \cite{k} provides a sufficient condition
for determining a universally optimal design over a class of competing designs $\D$.
\begin{thm} {\rm(Kiefer \cite{k})} Suppose a class $\C = \{ C(d) \mid d\in\D\}$ of matrices in $M_v$
contains a $C(d^*)$ for which
\begin{itemize}
  \item[\rm(i)] $C(d^*)$ is completely symmetric, that is its diagonal elements are constant and
its off-diagonal elements are constant, and
\item[\rm(ii)] $d^*$ maximizes the trace of $C(d)$ over $d\in\D$.
  \end{itemize}
Then $d^*$ is universally optimal over $\D$.
In particular, if $\D_{v, b, k}$ contains a BIBD $d^*$, then $d^*$ is universally optimal over $\D_{v, b, k}$.
\end{thm}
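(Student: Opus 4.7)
The plan is to reduce an arbitrary $d \in \D$ to $C(d^*)$ by a symmetrization argument that invokes the three hypotheses on $\Phi$ in turn. First, I would form the symmetrized matrix
$$\bar C(d) = \frac{1}{v!}\sum_{\sigma \in S_v} P_\sigma\, C(d)\, P_\sigma^\top,$$
where $P_\sigma$ ranges over the $v \times v$ permutation matrices. Property (iii) gives $\Phi(P_\sigma C(d) P_\sigma^\top) = \Phi(C(d))$ for every $\sigma$, and then Jensen's inequality, justified by convexity (property (i)), yields $\Phi(\bar C(d)) \lee \Phi(C(d))$.

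The next step is to compare $\bar C(d)$ with $C(d^*)$. Since $\bar C(d) \in M_v$ commutes with every simultaneous permutation of rows and columns, it is completely symmetric. A completely symmetric matrix in $M_v$ is forced to be a scalar multiple of $I - \tfrac{1}{v}J$, so such matrices form a one-parameter family indexed by the trace. Consequently $\bar C(d) = c\, C(d^*)$ with $c = \tr C(d)/\tr C(d^*)$, and the trace-maximality assumption (ii) of the theorem forces $0 \lee c \lee 1$. Property (ii) of $\Phi$ (nonincreasingness along positive rays) then gives
$$\Phi(\bar C(d)) = \Phi(c\, C(d^*)) \g \Phi(C(d^*)).$$
Chaining these inequalities produces $\Phi(C(d)) \g \Phi(C(d^*))$ for every $d \in \D$ and every admissible $\Phi$, which is universal optimality.

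For the concluding ``in particular'' claim about BIBDs, I would verify the two hypotheses when $d^*$ is a BIBD in $\D_{v,b,k}$ with replication $r$ and concurrence $\la$. A direct computation writes $C(d^*) = \tfrac{r(k-1)}{k}I - \tfrac{\la}{k}(J - I)$, which is completely symmetric with zero row sums (using the BIBD identity $(v-1)\la = r(k-1)$), giving (i). For (ii), the $(i,i)$-entry of $N(d)N(d)^\top$ satisfies $\sum_j n_{ij}^2 \g \sum_j n_{ij} = r_i$ because each $n_{ij}$ is a nonnegative integer, hence
$$\tr C(d) = \sum_i r_i - \tfrac{1}{k}\sum_{i,j} n_{ij}^2 \lee bk - \tfrac{bk}{k} = b(k-1),$$
a bound attained by any binary design in $\D_{v,b,k}$, in particular the BIBD.

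I do not anticipate a serious obstacle: the conceptual content of the argument is the observation that the three axioms on $\Phi$ match exactly the three operations---permutation conjugation, averaging, and contraction along a positive ray---needed to transport an arbitrary $C(d)$ onto $C(d^*)$ while only decreasing the value of $\Phi$. The only technical point worth phrasing carefully is the one-dimensionality of completely symmetric matrices inside $M_v$, which is what turns the trace-maximality condition (ii) of the theorem into the scalar inequality $c \lee 1$ needed to invoke axiom (ii) of $\Phi$.
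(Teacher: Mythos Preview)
Your argument is correct and is essentially Kiefer's original proof. Note, however, that the paper does not supply a proof of this theorem at all: it is stated as background, with attribution to Kiefer \cite{k}, and the paper's own contributions begin with Theorem~\ref{main}. So there is no in-paper proof to compare against; your write-up simply fills in the classical argument that the paper takes for granted.
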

In other words, the Kiefer's result says that if a design $d^*$ maximizes the trace of $C(d)$ over $d\in\D_{v, b, k}$ and the spectrum of $C(d^*)$ is of the form $\{\mu_1^{v-1}\}$, then
$d^*$ is universally optimal over $\D_{v, b, k}$.

\begin{thm} {\rm(Cheng \cite{ch78})} Suppose $d^*\in\D_{v,b,k}$ satisfies the following properties:
\begin{itemize}
\item[\rm(i)] $C(d^*)$ has spectrum of the form $\{\mu_1,\mu_2^{v-2}\}$,
\item[\rm(ii)] $d^*$ maximizes trace of $C(d)$ over $d\in\D_{v,b,k}$,
\item[\rm(iii)] $d^*$ minimizes the trace of $C(d)^2$ over $d\in\D_{v,b,k}$.
\end{itemize}
Then $d^*$ is type 1 optimal in $\D_{v,b,k}$.
\end{thm}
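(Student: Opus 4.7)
The plan is to prove the lower bound by passing to a continuous relaxation. Set $T^*=\tr C(d^*)=\mu_1+(v-2)\mu_2$ and $Q^*=\tr C(d^*)^2=\mu_1^2+(v-2)\mu_2^2$. By hypotheses (ii) and (iii), every $d\in\D_{v,b,k}$ satisfies $\sum_i\mu_i(d)\lee T^*$ and $\sum_i\mu_i(d)^2\g Q^*$, so it suffices to show that
\[
\min\Bigl\{\sum_{i=1}^{v-1}f(\lambda_i):\ \lambda\in\mathbb{R}_{>0}^{v-1},\ \sum\lambda_i\lee T^*,\ \sum\lambda_i^2\g Q^*\Bigr\}=f(\mu_1)+(v-2)f(\mu_2),
\]
attained at $\lambda=(\mu_1,\mu_2,\ldots,\mu_2)$.

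I would split this into equality-constrained subproblems: for admissible $(T,Q)$, let $S^*(T,Q)$ denote the minimum of $\sum_i f(\lambda_i)$ subject to $\sum\lambda_i=T$, $\sum\lambda_i^2=Q$, $\lambda>0$. The KKT conditions give $f'(\lambda_i)=\alpha+2\beta\lambda_i$ for multipliers $\alpha,\beta$; since $f'''<0$, the map $t\mapsto f'(t)-2\beta t$ is strictly concave, and this equation has at most two solutions. Hence every minimizer takes at most two distinct values. The all-equal case $\lambda_i\equiv z$ would require $T^2=(v-1)Q$, but strict Cauchy--Schwarz (using $\mu_1\neq\mu_2$) gives $T^{*2}<(v-1)Q^*$, which combined with $T\lee T^*$, $Q\g Q^*$ yields $T^2<(v-1)Q$. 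So $\lambda$ has exactly two distinct values, of multiplicities $k$ and $v-1-k$ for some $k\in\{1,\ldots,v-2\}$, uniquely determined by $T,Q$ for each $k$.

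The key step compares these critical points across $k$. Setting $p=k/(v-1)$, $\bar\lambda=T/(v-1)$, and $\sigma^2=Q/(v-1)-\bar\lambda^2$, the pair $(x_p,y_p)$ with $x_p>y_p$ solving the constraints is $x_p=\bar\lambda+\sigma\sqrt{(1-p)/p}$, $y_p=\bar\lambda-\sigma\sqrt{p/(1-p)}$, so $x_p-y_p=\sigma/\sqrt{p(1-p)}$. Writing $S(p)=(v-1)[p\,f(x_p)+(1-p)f(y_p)]$ and extending $p$ continuously to $(0,1)$, a direct differentiation (using that $p\,x_p'=(1-p)y_p'=-(x_p-y_p)/2$) yields
\[
S'(p)=(v-1)\Bigl[f(x_p)-f(y_p)-\tfrac{x_p-y_p}{2}\bigl(f'(x_p)+f'(y_p)\bigr)\Bigr],
\]
which is $(v-1)$ times the trapezoidal-rule error for $\int_{y_p}^{x_p}f'(t)\,dt$. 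Since $f'''<0$ makes $f'$ strictly concave and the trapezoidal rule strictly underestimates integrals of concave functions, $S'(p)>0$. Hence $S$ is strictly increasing in $p$, and its minimum over $k\in\{1,\ldots,v-2\}$ is attained at $k=1$. This gives $S^*(T,Q)=f(x_1)+(v-2)f(y_1)$ where $(x_1,y_1)$ solves $x+(v-2)y=T$, $x^2+(v-2)y^2=Q$, $x>y>0$; at $(T,Q)=(T^*,Q^*)$ this is $(\mu_1,\mu_2)$.

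Finally, I would apply the envelope theorem to $S^*$. At the $k=1$ minimizer, the multipliers are $\beta=(f'(x_1)-f'(y_1))/(2(x_1-y_1))>0$ (since $f''>0$) and $\alpha=f'(y_1)+y_1(f'(y_1)-f'(x_1))/(x_1-y_1)<0$ (both terms negative, using $f'<0$ and $f'$ strictly increasing). Hence $\partial S^*/\partial T=\alpha<0$ and $\partial S^*/\partial Q=\beta>0$, so $S^*$ is strictly decreasing in $T$ and strictly increasing in $Q$; consequently $S^*(T,Q)\g S^*(T^*,Q^*)$ whenever $T\lee T^*$ and $Q\g Q^*$, completing the proof. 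The main obstacle is establishing the monotonicity of $S(p)$ in the third step; this is where the unusual hypothesis $f'''<0$ is used essentially, for without it the various two-value configurations would be incomparable and $d^*$ need not be optimal.
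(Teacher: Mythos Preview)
The paper does not prove this theorem: it is quoted in the introduction as a result of Cheng (1978) and used only as background, so there is no proof in the paper to compare your attempt against. What you have written is essentially a reconstruction of Cheng's original argument, and the outline is correct: reduce to the continuous problem, use KKT plus strict concavity of $f'$ to force at most two distinct values, show via the trapezoid-error identity that the $(1,v-2)$ split beats every other split, and then use the signs of the multipliers to handle the inequality constraints on $T$ and $Q$.

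Two places deserve a sentence more of justification. First, for the KKT step you silently assume a constraint qualification; here LICQ holds because the gradients $(1,\dots,1)$ and $2\lambda$ are independent precisely when the $\lambda_i$ are not all equal, and you have already excluded that case. Second, the envelope argument needs $\alpha<0$ and $\beta>0$ not just at $(T^*,Q^*)$ but along the whole path from $(T(d),Q(d))$ to $(T^*,Q^*)$, and it needs that path to remain in the domain $\{(T,Q):T^2/(v-1)<Q<T^2\}$ where the $k=1$ solution has $y_1>0$. Both facts hold --- your formulas for $\alpha,\beta$ give the right signs at every interior $(T,Q)$, and the inequalities $T(d)\le T^*$, $Q^*\le Q(d)<T(d)^2$, $T^{*2}/(v-1)<Q^*$ keep the obvious monotone path inside the domain --- but they should be stated rather than left implicit.
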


\begin{thm}\label{ch} {\rm(Cheng \cite{ch87})} If there is a design $d^*\in\D_{v, b, k}$ such that
\begin{itemize}
\item[\rm(i)] $C(d^*)$ has spectrum of the form $\{\mu_1^{v-2},\mu_2\}$,
\item[\rm(ii)] $d^*$ maximizes trace of $C(d)$ over $d\in\D_{v,b,k}$,
\item[\rm(iii)] $d^*$ is $\Phi_p$-optimal for some $p > 0$,
\end{itemize}
then $d^*$ is $\Phi_q$-optimal for all $0\lee q\lee p$.
\end{thm}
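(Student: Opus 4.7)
\medskip
\noi\emph{Proof proposal.} My plan is to recast the statement as a constrained real-variable minimization and use KKT to restrict the structure of minimizers. Let $y=(\mu_1,\ldots,\mu_1,\mu_2)\in\mathbb{R}^{v-1}$ denote the nonzero spectrum of $C(d^*)$, set $T=(v-2)\mu_1+\mu_2$ and $P=(v-2)\mu_1^{-p}+\mu_2^{-p}$, and form the relaxed region
$$\Omega=\{x\in\mathbb{R}_{>0}^{v-1}:\textstyle\sum x_i\lee T,\ \sum x_i^{-p}\g P\}.$$
By hypotheses (ii) and (iii), the nonzero spectrum of every $d\in\D_{v,b,k}$ lies in $\Omega$, so (by continuity the case $q=0$ will follow from $q>0$) it is enough to show, for each $q\in(0,p]$, that $y$ minimizes $\psi(x):=\sum x_i^{-q}$ over $\Omega$.

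The first step is KKT at a minimizer $x^*\in\Omega$. Stationarity gives $-q(x_i^*)^{-q-1}+\la_1+\la_2 p(x_i^*)^{-p-1}=0$ with multipliers $\la_1,\la_2\g0$, so every coordinate $x_i^*$ is a positive solution of $h(x)=0$ where $h(x)=\la_2 p\,x^{-p-1}-q\,x^{-q-1}+\la_1$. Since $h'(x)=q(q+1)x^{-q-2}-\la_2 p(p+1)x^{-p-2}$ vanishes at most once on $(0,\infty)$ (because $p>q$), $h$ has at most one critical point and hence at most two positive zeros. Therefore $x^*$ takes at most two distinct coordinate values: either $x^*$ is constant, or it has the form of $k$ copies of some $a$ and $v-1-k$ copies of some $b$, with $a\ne b>0$ and $1\lee k\lee v-2$, subject to the binding constraints
$$ka+(v-1-k)b=T,\qquad ka^{-p}+(v-1-k)b^{-p}=P.$$

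The constant case $x_i^*=c$ is immediate: $\sum x_i^{*-p}\g P$ forces $c\lee((v-1)/P)^{1/p}$, and concavity of $t\mapsto t^{q/p}$ combined with Jensen applied to $(y_i^{-p})_i$ yields $\psi(x^*)\g(v-1)^{1-q/p}P^{q/p}\g\psi(y)$. The remaining two-value case reduces to showing that $F_k(q):=ka^{-q}+(v-1-k)b^{-q}$ is minimized, over all admissible triples $(k,a,b)$, at $(k,a,b)=(v-2,\mu_1,\mu_2)$, the spectrum of $d^*$. A useful structural input is that for every admissible $(k,a,b)$ the function $q\mapsto F_k(q)$ is a positive sum of two exponentials, hence convex in $q$, and all the $F_k$'s coincide at $q=-1,0,p$ with common values $T,v-1,P$. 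Treating $k$ as a continuous parameter on $[1,v-2]$, implicit differentiation of the two constraints yields closed-form expressions for $da/dk$ and $db/dk$, after which one computes $dF_k/dk$ and reads off its sign on $q\in(0,p)$ to conclude that the minimum is attained at the endpoint $k=v-2$.

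The main obstacle is precisely this monotonicity-in-$k$ step. The region $\Omega$ is non-convex (being the super-level set of a convex functional), so standard convex-duality arguments do not suffice and the specific two-eigenvalue structure of $C(d^*)$ must be exploited directly. An attractive alternative I would pursue in parallel is to look for a tangent-type support inequality $t^{-q}\g\al+\be\,t^{-p}-\gamma\,t$ with $\be,\gamma\g0$ valid on $(0,\infty)$ and tight at $t\in\{\mu_1,\mu_2\}$; summing this over the $x_i$ and invoking the two constraints would then yield $\sum x_i^{-q}\g\sum y_i^{-q}$ in one line. However, demanding both value and derivative to match at both $\mu_1$ and $\mu_2$ imposes four conditions on the three parameters $\al,\be,\gamma$, so the envelope must be built with care---presumably by dropping one tangency condition and absorbing the discrepancy via the multiplicity $v-2$ of $\mu_1$---before this shortcut can be made to work.
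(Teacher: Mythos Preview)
The paper does not give a proof of this theorem at all: it is quoted from Cheng (1987) as background, so there is no ``paper's own proof'' to compare against. Your outline is, in fact, close in spirit to how Cheng proceeds---reduce to a real-variable optimization, use first-order conditions to force any minimizer to take at most two distinct values, and then compare the resulting two-point configurations---so the strategy is sound and your handling of the reduction, the compactness/constraint-qualification issues, and the constant case are all fine.

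However, the proposal is genuinely incomplete, and you have correctly located the gap. The ``monotonicity-in-$k$'' step is the entire content of the theorem; everything before it is routine. Two remarks on your two suggested routes:

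\emph{(a) The tangent-support inequality cannot be made pointwise on $(0,\infty)$.} You are right that matching value and derivative at both $\mu_1$ and $\mu_2$ overdetermines $(\alpha,\beta,\gamma)$, but the difficulty is worse than a counting mismatch. With $\beta>0$ and $p>q$ one has $t^{-q}-\alpha-\beta t^{-p}+\gamma t\to-\infty$ as $t\to0^+$, so \emph{no} choice of parameters yields $t^{-q}\ge\alpha+\beta t^{-p}-\gamma t$ for all $t>0$. A simple zero at $\mu_2$ forces a sign change there, so the inequality fails for $t<\mu_2$; and the hypotheses give no a priori lower bound on the $x_i$. Hence the ``one-line'' summation argument cannot work as stated.

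\emph{(b) The one-parameter comparison needs the specific multiplicity pattern.} In the two-value case both multipliers are strictly positive (as you noted), so both constraints bind and the competing configurations form a one-parameter family of two-point measures with matching mass, first moment, and $(-p)$-th moment. Along this family the $(-q)$-th moment is not constant, and the crux is to show it is minimized precisely at the endpoint where the \emph{larger} value has multiplicity $v-2$ and the \emph{smaller} value has multiplicity $1$---this is where the hypothesis $\mu_1>\mu_2$ in the pattern $\{\mu_1^{v-2},\mu_2\}$ is used, and it is why the theorem only goes \emph{down} from $p$ to $q$, not up. Your implicit-differentiation plan is viable, but carrying it out requires a careful sign analysis that you have not supplied; this is exactly the computation Cheng performs, and it is not a formality.
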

In the same paper, Cheng also showed that the same result holds if one replace the condition (ii) in Theorem~\ref{ch} by ``(ii)$'$ $d^*$ is D-optimal.''

\begin{thm} {\rm(Jacroux \cite{j})} Suppose $d^*\in\D_{v,b,k}$ satisfies the following properties:
\begin{itemize}
\item[\rm(i)] $C(d^*)$ has spectrum of the form $\{\mu_1,\mu_2^{v-3},\mu_3\}$,
\item[\rm(ii)] $d^*$ minimizes the trace of $C(d)^2$ over $d\in\D_{v,b,k}$,
\item[\rm(iii)] $d^*$ is E-optimal.
\end{itemize}
Then $d^*$ is type 1 optimal in $\D_{v,b,k}$.
\end{thm}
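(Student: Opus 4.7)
The plan is to reduce the statement to a constrained nonlinear program on eigenvalue vectors and extract the conclusion from the Karush--Kuhn--Tucker (KKT) conditions, where the sign of $f'''$ plays the decisive role. Set $S=\tr(C(d^*))$, $T=\tr(C(d^*)^2)=\mu_1^2+(v-3)\mu_2^2+\mu_3^2$, and $m=\mu_3$. For any $d\in\D_{v,b,k}$, hypotheses (ii) and (iii) give $\sum_i\mu_i(d)^2\g T$ and $\mu_{v-1}(d)\lee m$, while $\tr(C(d))\lee b(k-1)$ with equality iff $d$ is binary. A preliminary reduction, using that $f$ is strictly decreasing, restricts attention to competitors with $\tr(C(d))=S$, so after reindexing the smallest coordinate to the last, the eigenvalue vector $\x=(\mu_1(d),\ldots,\mu_{v-1}(d))$ lies in
\[
{\cal F}=\{\x\in\mathbb{R}^{v-1}\,:\, x_i>0,\ \textstyle\sum_i x_i=S,\ \sum_i x_i^2\g T,\ x_{v-1}\lee m\},
\]
with $\x^*=(\mu_1,\mu_2,\ldots,\mu_2,\mu_3)\in{\cal F}$. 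Fixing a Type~1 function $f$, it suffices to show $\min_{\x\in{\cal F}}\sum_i f(x_i)$ is attained at $\x^*$.

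Write the Lagrangian with multipliers $\la$ (equality), $\al\g 0$ for $T-\sum_i x_i^2\lee 0$, and $\be\g 0$ for $x_{v-1}-m\lee 0$. Stationarity reads $f'(x_i)=\la+2\al x_i$ for $i<v-1$ and $f'(x_{v-1})=\la+2\al x_{v-1}+\be$. First one shows both inequality constraints are active at any minimizer. If $\al=0$, then $f'$ being strictly increasing (by $f''>0$) forces all $x_i$ to coincide, giving $\sum_i x_i^2=S^2/(v-1)<T$ by strict Cauchy--Schwarz (strict because $\x^*$ has three distinct entries), contradicting feasibility; a parallel argument (using that $f$ is decreasing, so pushing the smallest coordinate up toward $m$ decreases $\sum f$) rules out $\be=0$. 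With $\al>0$, the function $h(x)=f'(x)-2\al x$ has $h'(x)=f''(x)-2\al$, strictly decreasing because $f'''<0$, so $h$ is strictly concave and $h(x)=\la$ has at most two roots. Therefore the minimizer has at most three distinct entries, of the form $(a^{t_a},b^{t_b},m)$ with $t_a+t_b=v-2$ and $a>b\g m$.

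Parametrize these configurations by $\bar x=(S-m)/(v-2)$ and $V=(T-m^2)/(v-2)-\bar x^2>0$ (strict because $d^*$ has three distinct eigenvalues), yielding $a=\bar x+t_b\delta$, $b=\bar x-t_a\delta$, $\delta=\sqrt{V/(t_a t_b)}$. The program collapses to the one-variable integer minimization
\[
\vf(t_a)=t_a\, f(\bar x+t_b\delta)+t_b\, f(\bar x-t_a\delta)+f(m),\qquad t_a\in\{1,\ldots,v-3\},
\]
and $\x^*$ corresponds exactly to $t_a=1$. Expanding $f$ to third order about $\bar x$: the first-order contribution cancels, the second-order term equals $(v-2)V/2\cdot f''(\bar x)$ and is independent of $t_a$ (since $t_a t_b\delta^2=V$), and the surviving third-order term is $(v-2)V(t_b-t_a)\delta/6\cdot f'''(\bar x)$. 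Since $f'''<0$ and a direct calculation shows $(t_b-t_a)\delta=(v-2-2t_a)\sqrt{V/(t_a(v-2-t_a))}$ is strictly decreasing in $t_a\in[1,v-3]$, the third-order contribution drives $\vf$ downward most strongly at $t_a=1$.

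The main obstacle is promoting this leading-order calculation to the rigorous bound $\vf(t_a)>\vf(1)$ for every integer $t_a\g 2$, because $\delta$ need not be small when $t_a t_b$ is moderate. I would address this either by (a) computing $\vf'(t_a)$ treating $t_a$ as continuous on $[1,v-3]$, tracking the chain-rule terms from $\delta=\delta(t_a)$, and verifying $\vf'>0$ uniformly via $f''>0$ and $f'''<0$, or (b) a discrete swap argument that shifts one unit of multiplicity from $b$ to $a$, re-solves the two active constraints, and controls the sign of the resulting $\Delta\vf$ by a mean-value application to $f''$. Combined with a separate check that 2-value KKT candidates (where $b<m$) give strictly larger objective than $\x^*$, this establishes $\x^*$ as the unique global minimizer in ${\cal F}$; consequently $\sum_i f(\mu_i(d))\g\sum_i f(\mu_i(d^*))$ for every $d\in\D_{v,b,k}$, proving the Type~1 optimality of $d^*$.
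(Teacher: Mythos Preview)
The paper does not prove this theorem; it is quoted from Jacroux \cite{j} as background, with no argument supplied. So there is no ``paper's own proof'' to compare against. Your KKT approach is, however, very much in the spirit of what the paper does for its own main result (Theorem~\ref{main2}): restrict to a feasible region cut out by the trace, trace-square, and extremal-eigenvalue constraints, show via the concavity of $f'$ (i.e.\ $f'''<0$) that any KKT point has at most two ``free'' values among the first $v-2$ coordinates, and then compare the finitely many candidate patterns. That skeleton is sound.

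The genuine gap is the one you yourself flag. After reducing to the family $\vf(t_a)=t_a f(\bar x+t_b\delta)+t_b f(\bar x-t_a\delta)+f(m)$ with $t_a t_b\delta^2=V$ fixed, you only show $\vf(1)<\vf(t_a)$ to \emph{third order} in $\delta$; but $\delta=\sqrt{V/(t_a t_b)}$ is not a small parameter (it is largest precisely at the endpoint $t_a=1$), so a Taylor argument about $\bar x$ proves nothing. Your proposed fixes (a) and (b) are not carried out, and (a) in particular is delicate: differentiating in $t_a$ through $\delta(t_a)$ produces competing terms with both signs, and one really needs a structural inequality (of the Bennett type, Lemma~\ref{ben2} with $\vf=-f$, exploiting that $-f$ is concave and $(-f)'$ convex when $f''>0$, $f'''<0$) to compare two interlaced two-point distributions with matching first two moments. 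Without that step the argument is incomplete.

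Two smaller points also need attention. First, your dismissal of $\be=0$ (``push the smallest coordinate up toward $m$'') ignores the equality constraint $\sum x_i=S$; the correct argument is that with $\be=0$ all $v-1$ coordinates lie on $h(x)=\la$, forcing at most two distinct values, and you must then compare those two-value configurations directly with $\x^*$ rather than exclude them. Second, the ``reindex the smallest coordinate to the last'' device does not by itself ensure $b\g m$ at the KKT point; to avoid spurious configurations with $b<m<a$ you should either work on the ordered cone (adding constraints $x_i\g x_{v-1}$ with their own multipliers, as the paper does via its $m_i,n_i$) or handle that case separately.
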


A design $d^*$ is said to be {\em $M$-optimal} (or {\em Schur-optimal}) in $\D_{v,b,k}$ if for any $d\in\D_{v,b,k}$, the vector of eigenvalues of $C(d)$, ordered decreasingly, majorizes the vector of eigenvalues of $C(d^*)$, that is
$$\sum_{i=1}^t\mu_i(d^*)\lee\sum_{i=1}^t\mu_i(d),~~~\hbox{for all $t=1,\ldots,v-1$.}$$
It is known that if $d^*$ is $M$-optimal, then it is optimal with respect to many criteria including type~1 optimality. We note that for a partial geometric design $d$, the replication $r$ is an eigenvalue of $C(d)$ as $S(d)$ is singular.
The {\em dual} of a design $d$ with $b$ blocks and $v$ treatments
is the design $\overline{d}$ with $v$ blocks, $b$ treatments and $N(\overline{d})=N(d)^\top$.

\begin{thm} {\rm(Bagchi and Bagchi \cite{bb})}
Suppose $d^*\in\D_{v,b,k}$ is a partial geometric design with replication $r$ and spectrum $\{r^g,\mu^{v-1-g}\}$.
If $d^*$ satisfies the following properties
\begin{itemize}
\item[\rm(i)] $g\lee\frac{(v-1)(k-1)}{r(v-k)}$,
\item[\rm(ii)]  the dual of $d^*$ is M-optimal in the class of all equireplicate designs of $\D_{b,v,r}$,
\end{itemize}
 then $d^*$ is M-optimal in $\D_{v,b,k}$.
\end{thm}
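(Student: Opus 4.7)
For equireplicate competitors, the plan is to use the duality $N(\overline d)=N(d)^\top$: for an equireplicate $d\in\D_{v,b,k}$ with replication $r$, the nonzero eigenvalues of $S(d)=N(d)N(d)^\top$ other than the Perron value $rk$ -- list them $s_1\lee\cdots\lee s_{v-1}$, zeros allowed -- coincide with the corresponding nonzero eigenvalues of $N(d)^\top N(d)=S(\overline d)$. Then $\overline d$ lies in $\D_{b,v,r}$, is equireplicate with replication $k$, and satisfies $C(d)=rI-\tfrac1k S(d)$ and $C(\overline d)=kI-\tfrac1r S(\overline d)$. Consequently the nonzero eigenvalues of $C(d)$ are $r-s_i/k$, while those of $C(\overline d)$ are $k-s_i/r$ together with $b-v$ extra copies of $k$ coming from the larger nullspace of $N^\top N$. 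Specialised to $d^*$, where $s_1=\cdots=s_g=0$ and $s_{g+1}=\cdots=s_{v-1}=k(r-\mu)$, this yields $C(\overline{d^*})$ with spectrum $\{k^{b-v+g},(k\mu/r)^{v-1-g}\}$. Using the common trace $\sum_i s_i=r(v-k)$, both majorisations $\mu_\bullet(d)\succeq\mu_\bullet(d^*)$ and $\mu_\bullet(\overline d)\succeq\mu_\bullet(\overline{d^*})$ collapse to the same partial-sum inequalities $\sum_{i=1}^t s_i(d)\lee\sum_{i=1}^t s_i(d^*)$ for $t=1,\ldots,v-1$, the dual version contributing $b-v$ trivially tight inequalities before producing the same nontrivial ones. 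Hence condition~(ii) delivers $M$-optimality of $d^*$ over the equireplicate subclass of $\D_{v,b,k}$ with no further work.

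For a non-equireplicate $d\in\D_{v,b,k}$ the duality breaks, since $\overline d$ has block sizes equal to the replications $r_1,\ldots,r_v$ of $d$ and so does not lie in $\D_{b,v,r}$. Here I would write $R(d)=rI+\Delta$ with $\Delta$ a nonzero traceless diagonal, bound $\sum_{i=1}^t\mu_i(d)$ from below using the fixed trace $\tr C(d)=b(k-1)$ together with the sharp inequality $\sum_i r_i^2>vr^2$ that measures deviation from equireplicability, and verify that the resulting lower bound already dominates $\sum_{i=1}^t\mu_i(d^*)=\min\{tr,\,gr+(t-g)\mu\}$ for every $t$. A natural reduction is to the minimal perturbation in which two treatments have replications $r\pm1$ and the rest $r$; for that perturbation $C(d)$ can be diagonalised explicitly enough to read off the top partial sums.

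The main obstacle is precisely this non-equireplicate step. Condition~(i) rearranges to $g\cdot r(v-k)\lee(v-1)(k-1)$, and it must enter in exactly the right place to guarantee that the spectral ``gain'' forced by any $\Delta\neq 0$ is at least as large as the gap between the partial sums of $d$ and those of $d^*$. Establishing this uniformly in $t$ and in $\Delta$, rather than only in the simplest $\pm 1$ perturbation case, is where I expect the bulk of the technical work to lie.
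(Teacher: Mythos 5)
This statement is Theorem~5 of the paper, which is quoted from Bagchi and Bagchi \cite{bb}; the paper itself contains no proof of it, so there is nothing internal to compare your attempt against and I assess it on its own terms. The first half of your proposal is sound: for an equireplicate competitor $d$ the duality $N(\overline{d})=N(d)^\top$ does identify the nonzero spectra of $N(d)N(d)^\top$ and $N(d)^\top N(d)$ up to $b-v$ extra zeros, your computation of the spectrum of $C(\overline{d^*})$ as $\{k^{b-v+g},(k\mu/r)^{v-1-g}\}$ is correct, and both majorizations do reduce to the same partial-sum inequalities on the increasingly ordered concurrence eigenvalues. So hypothesis (ii) really does give M-optimality of $d^*$ within the equireplicate subclass of $\D_{v,b,k}$ (indeed it even forces $S(d)$ to have nullity at least $g$ for every equireplicate competitor, since the eigenvalues of $C(\overline{d})$ are bounded above by $k$).

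The gap is that the theorem claims M-optimality over all of $\D_{v,b,k}$, and your treatment of non-equireplicate (and non-binary) competitors is a plan rather than a proof, as you acknowledge. Three concrete problems. First, $\tr\,C(d)=b(k-1)$ is not fixed over $\D_{v,b,k}$: it holds only for binary designs (the paper states this explicitly before Theorem~7), and for non-binary $d$ the trace is strictly smaller, so an argument anchored on a fixed trace must first eliminate non-binary competitors by a separate step. Second, the reduction to the $r\pm1$ perturbation is unjustified: the quantities to be controlled are the top-$t$ partial sums $\sum_{i=1}^t\mu_i(d)$ for every $t$ simultaneously, and you offer no convexity or exchange argument showing the extremal case is the minimal perturbation. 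Third, condition (i) is never actually used; saying it ``must enter in exactly the right place'' is the missing proof, not a placeholder for it. The need is real: for $t\lee g$ one must show $\sum_{i=1}^t\mu_i(d)\g tr$, and once duality is unavailable the obvious bounds fail --- Schur majorization of the diagonal of $C(d)$ gives only $tr(k-1)/k$, and the average-eigenvalue bound gives $tb(k-1)/(v-1)<tr$ whenever $k<v$ --- so an argument genuinely exploiting $g\lee(v-1)(k-1)/(r(v-k))$ is indispensable there. As it stands the proposal establishes only the equireplicate case of the theorem.
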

For similar results on M-optimality see \cite{bmm}. More results on optimality of designs with few eigenvalues can be found in \cite{b,ch78}.

In this paper, we continue this line of research and prove the following theorem.

\begin{thm}\label{main} Let $\D$ be the class of all binary designs of $\D_{v,b,k}$. Let $d^*\in\D$ such that $C(d^*)$ has two nonzero distinct eigenvalues.
If
 \begin{itemize}
   \item[\rm(i)] $d^*$ minimizes the trace of $C(d)^2$ over $d\in\D$,
    \item[\rm(ii)] $d^*$ is E-optimal in $\D$,
    \item[\rm(iii)] $d^*$ is D-optimal in $\D$,
   \end{itemize}
   then $d^*$ is $\Phi_p$-optimal for all $p>0$ in $\D$.
\end{thm}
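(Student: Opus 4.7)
The plan is to recast the desired conclusion as a nonlinear program on $\mathbb{R}^{v-1}_{>0}$ and analyze it via the KKT conditions. A first useful observation is that for any binary $d\in\D$, the $i$-th diagonal entry of $C(d)$ equals $r_i(k-1)/k$; hence $\tr C(d)=b(k-1)$ is a constant $T$ independent of $d$. Writing $\mu^*=\mu(d^*)=(\al^m,\be^n)$ with $\al>\be$ and $m+n=v-1$, the hypotheses (i), (ii), (iii) translate into
$$
\textstyle\sum_i\mu_i(d)^2\g S,\qquad \min_i\mu_i(d)\lee\be,\qquad \prod_i\mu_i(d)\lee P,
$$
where $S=m\al^2+n\be^2$ and $P=\al^m\be^n$, to be combined with $\sum_i\mu_i(d)=T$. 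It therefore suffices to prove that $\mu^*$ minimizes $F_p(\mu):=\sum_i\mu_i^{-p}$ over the set of $\mu\in\mathbb{R}^{v-1}_{>0}$ satisfying these four constraints; substituting $\mu=\mu(d)$ for an arbitrary $d\in\D$ then yields $F_p(d)\g F_p(d^*)$, which is the $\Phi_p$-optimality assertion.

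Sort $\mu_1\g\cdots\g\mu_{v-1}$ so that the last constraint becomes $\mu_{v-1}\lee\be$, and form the Lagrangian
$$
L=\textstyle\sum_i\mu_i^{-p}+\la_0\bigl(\sum_i\mu_i-T\bigr)+\la_1\bigl(S-\sum_i\mu_i^2\bigr)+\la_2\bigl(\prod_i\mu_i-P\bigr)+\la_3(\mu_{v-1}-\be),
$$
with $\la_1,\la_2,\la_3\g 0$ and $\la_0\in\mathbb{R}$. At a KKT point $\hat\mu$, the stationarity condition $\partial L/\partial\mu_i=0$ for $i<v-1$ reads $\phi(\hat\mu_i)=0$, where
$$
\phi(x)=-p\,x^{-p-1}+\la_0-2\la_1 x+\la_2 P/x.
$$
An analysis of $\phi'$ and of $\phi''(x)=-p(p+1)(p+2)x^{-p-3}+2\la_2 P/x^3$ on $(0,\infty)$ shows that $\phi$ has at most two positive roots. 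Consequently $\hat\mu_1,\ldots,\hat\mu_{v-2}$ take at most two distinct values, while $\hat\mu_{v-1}$ is either equal to $\be$ (if $\la_3>0$) or is itself a root of $\phi$ (if $\la_3=0$). Hence $\hat\mu$ has at most three distinct positive entries.

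The final step is a case analysis on the number of distinct values of $\hat\mu$. The all-equal case $\hat\mu_i=T/(v-1)$ is ruled out by Cauchy--Schwarz combined with $\al\ne\be$, which gives $T^2/(v-1)<S$ and violates the sum-of-squares constraint. If $\hat\mu$ has exactly two distinct values $u>w$ with multiplicities $j$ and $v-1-j$, the sub-case $w=\be$ forces $u=[m\al-(m-j)\be]/j$ and $j\lee m$ via the sum-of-squares constraint, and Jensen's inequality applied to the convex function $x\mapsto x^{-p}$ through the identity $(j/m)u+((m-j)/m)\be=\al$ yields $F_p(\hat\mu)\g F_p(\mu^*)$ with equality iff $j=m$, i.e., $\hat\mu=\mu^*$; the sub-case $w<\be$ is more delicate (a plain majorization argument can fail) and requires a direct comparison exploiting the stationarity relation $\phi(w)=0$ together with all three moment constraints. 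The remaining possibility is three distinct values $\rho_1>\rho_2>\be$ with $\la_3>0$, so that $\rho_1,\rho_2$ are the two roots of $\phi$ and the sum, sum-of-squares, and product constraints are all tight; a contradiction is to be derived by combining the stationarity equations $\phi(\rho_1)=\phi(\rho_2)=0$ with the three moment equalities and the positivity of $\la_1,\la_2,\la_3$. The three-value elimination together with the $w<\be$ sub-case forms the principal technical obstacle and calls for a careful algebraic analysis of $\phi$ under the assumption $\al\ne\be$.
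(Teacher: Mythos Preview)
Your reduction to a constrained minimization on $\mathbb{R}^{v-1}_{>0}$ and the appeal to KKT conditions is exactly the paper's route, and your claim that $\phi$ has at most two positive zeros is correct (multiply through by $x^{p+1}$ and note that the derivative then has at most one positive zero). But the proposal is not a proof: you explicitly leave the sub-case $w<\be$ and the three-value case as unresolved ``technical obstacles,'' and these are precisely where the work lies. There are also two omissions in your setup. The feasible region is not compact (coordinates may tend to $0$), so the paper adds an auxiliary constraint $\mu_{\min}\g\xi$ with $\xi^{-p}>F_p(\mu^*)$ to ensure the infimum is attained away from that artificial boundary. And a constraint qualification (the paper checks MFCQ) must be verified before KKT may be invoked; this is also where the explicit ordering constraints $x_1\lee x_i\lee x_n$ enter, which your Lagrangian omits.

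For the cases you leave open, the paper's resolution goes as follows. In the two-value situation one first argues that the multiplier on the quadratic constraint is strictly positive, so that $ju^2+(v-1-j)w^2=m\al^2+n\be^2$ holds with \emph{equality}; combined with the linear equality and $w\lee\be$, an elementary comparison (Lemma~\ref{2eq}) forces $j=m$, $u=\al$, $w=\be$, so no separate $w<\be$ analysis is needed once tightness is established. In the three-value situation the smallest coordinate is forced to equal $\be$ (otherwise all coordinates are zeros of $\phi$ and only two values appear), and the remaining coordinates take two values $\rho_2<\rho_1$ with $\be<\rho_2$ and $\al<\rho_1$. The contradiction then comes not from manipulating $\phi(\rho_1)=\phi(\rho_2)=0$ but from a Bennett-type inequality (Lemma~\ref{ben2}) applied with $\vf(x)=\ln x$: under matching first and second weighted moments and this interlacing pattern one obtains $\sum_i\ln\hat\mu_i>m\ln\al+n\ln\be$, i.e.\ $\prod_i\hat\mu_i>P$, contradicting the product constraint.
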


\begin{rem} The referee pointed out that the designs which satisfy the hypotheses of Theorem~\ref{main} seem rather likely to be partially balanced designs with two associate classes and concurrences differing by one (for definition and properties see \cite[Chapter~11]{st}).
A related result was proved in \cite{cb} where it was shown that
among partially balanced designs with two associate classes and concurrences differing by one those which have a singular concurrence matrix are type 1 optimal within the subclass of all binary equireplicate incomplete designs of $\D_{v,b,k}$.
\end{rem}

As an application of Theorem~\ref{main}, we demonstrate the $\Phi_p$-optimality of certain group divisible designs.
 Group divisible designs are an important class of partially balanced incomplete
block designs. These designs
have $v$ treatments divided into $m$ groups of $n$ treatments each such that treatments
in the same group occur together in $\la_1$ blocks and treatments in different groups occur together in $\la_2$ blocks.
Jacroux \cite{j0} showed that group-divisible designs of group
size $2$, $k\g3$, and $\la_2=\la_1+1$ or $\la_2=\la_1-1$ (where $\la_1>1$) are D-optimal. These
designs have two distinct nonzero eigenvalues, and clearly minimize $\tr(C^2)$. They are
also E-optimal. The E-optimality of the former was shown by Takeuchi \cite{t} and the latter by Cheng \cite{ch80}. Hence by
Theorem~\ref{main}, they are $\Phi_p$-optimal. We summarize this in the following theorem.

\begin{thm}  Group-divisible designs of group
size $2$, $k\g3$, and $\la_2=\la_1+1$ or $\la_2=\la_1-1$ (where $\la_1>1$) are $\Phi_p$-optimal for all $p>0$.
\end{thm}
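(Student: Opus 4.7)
The plan is direct: verify the three hypotheses of Theorem~\ref{main} for the given group-divisible design $d^*$, viewed as an element of the class $\D$ of all binary designs in $\D_{v,b,k}$, and then apply Theorem~\ref{main}.

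Before checking (i)--(iii), one must verify the spectral hypothesis that $C(d^*)$ has exactly two distinct nonzero eigenvalues. This follows from the standard association-scheme decomposition of a group-divisible design: with group size $n=2$, the Bose--Mesner algebra of the underlying scheme has only two nontrivial idempotents, so $S(d^*)$ has three eigenvalues (one of which is $rk$ on the all-ones direction); since $d^*$ is equireplicate and $C(d^*)=rI-\frac{1}{k}S(d^*)$, the matrix $C(d^*)$ has exactly two distinct nonzero eigenvalues. For hypothesis~(i), a short calculation gives $\tr(C(d)^2)=\frac{(k-1)^2}{k^2}\sum_i r_i^2+\frac{1}{k^2}\sum_{i\ne j}s_{ij}^2$ for any binary $d$, subject to the fixed totals $\sum_i r_i=bk$ and $\sum_{i\ne j}s_{ij}=bk(k-1)$. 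By convexity and integrality, the minimum is attained when all $r_i$ are equal (equireplication) and all off-diagonal $s_{ij}$ differ by at most one; the GDD $d^*$ with $|\la_2-\la_1|=1$ realizes both.

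For the remaining two hypotheses I would simply invoke the existing literature: E-optimality of $d^*$ is due to Takeuchi~\cite{t} when $\la_2=\la_1+1$ and to Cheng~\cite{ch80} when $\la_2=\la_1-1$, while D-optimality is the theorem of Jacroux~\cite{j0}. All three hypotheses of Theorem~\ref{main} now hold, so that theorem yields the $\Phi_p$-optimality of $d^*$ in $\D$ for every $p>0$.

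The bulk of the work is packaged into Theorem~\ref{main} and into the cited optimality results; the only verification genuinely performed here is hypothesis~(i), and that amounts to a routine rounding argument. The main obstacle, if any, is ensuring that each cited result holds in exactly the class $\D$ of all binary designs in $\D_{v,b,k}$ rather than in some more restrictive subclass, but this is the case in each of the references invoked.
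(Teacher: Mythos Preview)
Your proposal is correct and follows essentially the same route as the paper: verify that the GDD has two distinct nonzero eigenvalues, note that equireplication together with concurrences differing by one makes it minimize $\tr(C^2)$ among binary designs, cite Takeuchi~\cite{t} and Cheng~\cite{ch80} for E-optimality and Jacroux~\cite{j0} for D-optimality, and then invoke Theorem~\ref{main}. The paper simply asserts that such designs ``clearly minimize $\tr(C^2)$'' where you spell out the convexity/integer-rounding argument, but otherwise the two arguments are identical.
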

Another application of Theorem~\ref{main} concerning the optimality of the Petersen graph will be given at the end of Section~3.

Theorem~\ref{main} is a consequence of the following general inequality which could be of interest on its own.
We recall that for two different designs $d_1,d_2\in\D_{v,b,k}$, it is likely that ${\rm trace}\,C(d_1)\ne{\rm trace}\,C(d_2)$.
Nonetheless, for binary designs these are equal, namely for all binary designs $d\in\D_{v,b,k}$, ${\rm trace}\,C(d)=b(k-1)$.

\begin{thm}\label{main2} Let $(\te_1,\ldots,\te_n)$ be a vector consisting of two distinct, positive components.
  If a vector $(x_1,\ldots,x_n)$ of positive components satisfies the conditions
\begin{itemize}
   \item[\rm(i)] $x_1+\cdots+x_n=\te_1+\cdots+\te_n$,
    \item[\rm(ii)] $x_1^2+\cdots+x_n^2\g \te_1^2+\cdots+\te_n^2$,
    \item[\rm(iii)] $\min\{x_i \mid i=1,\ldots,n\}\lee\min\{\te_i \mid i=1,\ldots,n\}$,
    \item[\rm(iv)] $\prod_{i=1}^n x_i \lee \prod_{i=1}^n \te_i $,
   \end{itemize}
   then for all $p>0$,
$$x_1^{-p}+\cdots+x_n^{-p}\lee \te_1^{-p}+\cdots+\te_n^{-p}.$$
\end{thm}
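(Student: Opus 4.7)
The plan is to use the KKT conditions of nonlinear programming (as the abstract announces) to show that $\te=(\te_1,\ldots,\te_n)$ is the unique minimizer of $F(x):=x_1^{-p}+\cdots+x_n^{-p}$ on the feasible region $\Omega\subset\mathbb{R}_{>0}^n$ cut out by (i)--(iv); this is exactly what one needs in order to derive from Theorem~\ref{main2} the $\Phi_p$-optimality assertion of Theorem~\ref{main}, where $d^*$ is required to \emph{minimize} $\sum_i\mu_i(d)^{-p}$ among competing designs.

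Set $T:=\sum\te_i$, $Q:=\sum\te_i^2$, $\al:=\min\te_i$, $P:=\prod\te_i$. Existence of a minimizer is immediate: (i) bounds $x_i\lee T$ for $x\in\Omega$, and $F(x)\to\infty$ as any $x_i\to 0^+$, so the infimum is attained at some interior $x^*$. By symmetry we may relabel so that $x_n^*=\min_i x_i^*\lee\al$, and then (iii) becomes the smooth scalar constraint $x_n^*\lee\al$. Writing the Lagrangian with multipliers $\la\in\mathbb{R}$ for (i) and $\mu_2,\mu_3,\mu_4\g 0$ for (ii)--(iv), and multiplying the $i$-th stationarity equation by $x_i^*$, every free index $i<n$ produces the single scalar identity $h(x_i^*)=-\mu_4\prod_j x_j^*$, where
\[
h(u):=\la u-2\mu_2 u^2-p u^{-p}.
\]

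The crucial structural ingredient is
\[
h''(u)=-4\mu_2-p^2(p+1)u^{-p-2}<0\qquad(u>0,\ p>0,\ \mu_2\g 0),
\]
so $h$ is strictly concave on $(0,\infty)$ and therefore the level equation $h(u)=c$ has at most two positive roots. It follows that $x_1^*,\ldots,x_{n-1}^*$ take at most two distinct values, and adjoining $x_n^*$ gives at most three distinct values in all of $x^*$. A case analysis on the number of distinct values of $x^*$ and on the positivity pattern of $(\mu_2,\mu_3,\mu_4)$ then concludes: the single-value case is killed immediately by strict inequality in (ii) (since $\te$ has two distinct values, $Q>T^2/n$); the two-value case is forced by (i) together with the active subset of (ii)--(iv) to coincide with $\te$ up to permutation; and the genuine three-value case is ruled out by combining the KKT level equation with the complementary-slackness relation $\mu_3(x_n^*-\al)=0$ and the strict concavity of $h$, which together force the would-be third value to collapse onto one of the other two.

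The main obstacle is precisely this three-value case. Strict concavity of $h$ cleanly bounds the number of distinct values, but translating this bound into the rigid identification $x^*=\te$ requires careful bookkeeping: one must track which of the multipliers $\mu_2,\mu_3,\mu_4$ are positive, how the multiplicities $(m_1,m_2,m_3)$ with $m_1+m_2+m_3=n$ are distributed among the candidate values, and how the resulting algebraic system interacts with the binding subset of (i)--(iv) and with the scalar identity $h(u)=c$. Once this rigidity is settled, $\te$ emerges as the unique minimizer of $F$ on $\Omega$, which gives the desired inequality of the theorem.
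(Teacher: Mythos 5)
Your overall route is the paper's route: recast the claim as ``$\te$ is the global minimizer of $F$ over the region cut out by (i)--(iv)'' (correctly reading the conclusion in the direction needed for Theorem~\ref{main}), get existence from the blow-up of $F$ near the boundary, invoke KKT, and observe that stationarity forces every coordinate not involved in the $\min$-constraint to solve a level equation $h(u)=c$ with $h$ strictly concave, hence to take at most two values. This is exactly the paper's key mechanism (there phrased as: $y(x)=-p+\nu x^{p+1}-2\la x^{p+2}+\rho Dx^{p}$ has at most two positive zeros). Two remarks before the main issue: you never verify a constraint qualification, which the paper must do (MFCQ, Lemma~\ref{mfcq}) before KKT becomes a necessary condition; and in the two-value case you need to first show that the multiplier of (ii) is strictly positive --- the paper does this by noting that otherwise the level equation has only one positive root, forcing all coordinates equal, which is impossible --- so that (ii) is active, and then a rigidity statement such as Lemma~\ref{2eq} (equal first and second moments plus $\min_i x_i^*\lee\te_1$ force $x^*$ to be a permutation of $\te$). ``Forced by (i) together with the active subset of (ii)--(iv)'' is not yet an argument.

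The genuine gap is the three-value case, which you correctly single out as the main obstacle but then dispose of by an incorrect mechanism. The minimum coordinate obeys a \emph{different} stationarity identity, $h(x_n^*)=-\mu_4\prod_jx_j^*-\mu_3x_n^*$, so when $\mu_3>0$ the strict concavity of $h$ places no constraint on it at all: complementary slackness then pins $x_n^*=\al=\te_1$, and the configuration $\te_1<u_1<u_2$ with the remaining $n-1$ coordinates split between the two roots $u_1,u_2$ of $h(u)=-\mu_4\prod_jx_j^*$ is fully consistent with every KKT relation. Nothing ``collapses.'' (Collapse does occur when $\mu_3=0$, since then $x_n^*$ is itself a root of the same level equation; but that is the easy subcase.) This surviving configuration is precisely the paper's Cases 3 and 4, and it is eliminated there only by bringing in the product constraint (iv): after locating the values as $\te_1<u_1<\te_2<u_2$, Bennet's interlacing lemma (Lemma~\ref{ben2}) applied to $\vf(x)=\ln x$ (concave with convex derivative) yields $\prod_ix_i^*>\prod_i\te_i$, contradicting (iv). Your sketch makes no use of (iv) in this case, so as proposed the argument does not close; you need either Bennet's lemma or an equivalent majorization/Schur-type inequality comparing $(u_1,u_2)$ with $(\te_1,\te_2)$ under the moment constraints.
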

The proof of Theroem~\ref{main2} is based on Karush--Kuhn--Tucker (KKT) conditions from nonlinear programming and shall be presented in Section~3.

\section{Karush--Kuhn--Tucker (KKT) conditions}

In nonlinear programming, the Karush--Kuhn--Tucker (KKT) conditions are
necessary for a local solution to a minimization problem provided that some regularity conditions are
satisfied. Allowing inequality constraints, the KKT approach to nonlinear programming generalizes the method of
Lagrange multipliers, which allows only equality constraints. For details see \cite{book}.

Consider the following optimization problem:
\begin{quote}
   Minimize $f(\x)$\\
   subject to:\\
   $~~~~~g_i(\x)\lee0$,~ for $i\in I$,\\
   $~~~~~h_j(\x)=0$,~ for $j\in J$,
\end{quote}
where $I$ and $J$ are finite sets of indices.
Suppose that the objective function $f:\mathbb{R}^n\to\mathbb{R}$ and the constraint functions $g_i:\mathbb{R}^n\to\mathbb{R}$ and
$h_j:\mathbb{R}^n\to\mathbb{R}$ are continuously differentiable at a point $\x^*$. If $\x^*$ is a local minimum that satisfies some regularity conditions, then there exist constants $\nu_i$ and $\la_j$, called
KKT multipliers, such that
\begin{align*}
\nabla f(\x^*)+\sum_{i\in I}&\,\nu_i\nabla g_i(\x^*)+\sum_{j\in J}\la_j\nabla h_j(\x^*)={\bf0}\\
  g_i(\x^*)&\lee0,~~~\hbox{for all $i\in I$},\\
   h_j(\x^*)&=0,~~~\hbox{for all $j\in J$},\\
    \nu_i&\g0,~~~\hbox{for all $i\in I$},\\
     \nu_ig_i(\x^*)&=0,~~~\hbox{for all $i\in I$}.
\end{align*}

In order for a minimum point to satisfy the above KKT conditions, it should satisfy some regularity conditions (or constraint qualifications).  The one which suits our problem is the Mangasarian--Fromovitz constraint qualification (MFCQ).
Let $I(\x^*)$ be the set of indices of active inequality constraints at $\x^*$, i.e.
$I(\x^*)=\left\{i\in I\mid  g_i(\x^*)=0\right\}$. We say that MFCQ holds at a feasible point $\x^*$
if the set of gradient vectors
$\{\nabla h_j(\x^*)\mid j\in J\}$ is linearly independent and that there exists $\w\in\mathbb{R}^n$ such that
\begin{align*}
  \nabla g_i(\x^*)\w^\top&<0,~~~  \hbox{for all $i\in I(\x^*)$},\\
  \nabla h_j(\x^*)\w^\top&=0,~~~ \hbox{for all $j\in J$}.
\end{align*}

\begin{thm} {\rm(\cite{mf}, see also \cite{book})} If a local minimum $\x^*$ of the function  $f(\x)$ subject to the constraints $g_i(\x)\lee0$, for $i\in I$, and   $h_j(\x)=0$, for $j\in J$, satisfies MFCQ, then it satisfies the KKT conditions.
\end{thm}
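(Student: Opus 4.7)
The plan is to derive the KKT multipliers from the first-order necessary condition for a local minimum by showing that, under MFCQ, every direction in the linearization cone
$$L(\x^*)=\{d\in\mathbb{R}^n\mid \nabla g_i(\x^*)d^\top\lee 0 \text{ for all } i\in I(\x^*),\ \nabla h_j(\x^*)d^\top=0 \text{ for all } j\in J\}$$
is actually tangent to the feasible set at $\x^*$, and then invoking Farkas' lemma to turn the resulting ``no-feasible-descent'' inequality into a conic representation of $-\nabla f(\x^*)$.

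First I would record the elementary consequence of local minimality: for every $C^1$ feasible arc $\gamma:[0,\varepsilon)\to\mathbb{R}^n$ with $\gamma(0)=\x^*$, one has $\nabla f(\x^*)\gamma'(0)^\top\g 0$. The real work is to realize every $d\in L(\x^*)$ as the initial velocity $\gamma'(0)^\top$ of such an arc. Given $d\in L(\x^*)$ and the MFCQ direction $\w$, I would form the tilted direction $d_\varepsilon=d+\varepsilon\w$ for small $\varepsilon>0$. For each active $i\in I(\x^*)$ the tilt makes $\nabla g_i(\x^*)d_\varepsilon^\top<0$, while $\nabla h_j(\x^*)d_\varepsilon^\top=0$ is preserved. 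The linear independence of $\{\nabla h_j(\x^*)\}_{j\in J}$ built into MFCQ then lets me apply the implicit function theorem to produce a $C^1$ arc $\gamma(t)$ with $\gamma(0)=\x^*$ and $\gamma'(0)^\top=d_\varepsilon$ along which each $h_j(\gamma(t))=0$ holds exactly. A Taylor expansion then yields $g_i(\gamma(t))<0$ for small $t>0$: for active $i$ by the strict negativity of $\nabla g_i(\x^*)d_\varepsilon^\top$, and for inactive $i$ by continuity from the strictly negative value $g_i(\x^*)<0$. Hence $d_\varepsilon$ is a tangent direction, so $\nabla f(\x^*)d_\varepsilon^\top\g 0$; letting $\varepsilon\downarrow 0$ gives $\nabla f(\x^*)d^\top\g 0$ for every $d\in L(\x^*)$.

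The last step is to dualize via Farkas' lemma. The polar of $L(\x^*)$ equals the conic hull
$$\Bigl\{\sum_{i\in I(\x^*)}\nu_i\nabla g_i(\x^*)+\sum_{j\in J}\la_j\nabla h_j(\x^*) : \nu_i\g 0,\ \la_j\in\mathbb{R}\Bigr\},$$
and the first-order inequality just established says $-\nabla f(\x^*)$ belongs to this polar. Therefore there exist multipliers $\nu_i\g 0$ ($i\in I(\x^*)$) and $\la_j\in\mathbb{R}$ ($j\in J$) with
$$-\nabla f(\x^*)=\sum_{i\in I(\x^*)}\nu_i\nabla g_i(\x^*)+\sum_{j\in J}\la_j\nabla h_j(\x^*).$$
Extending by $\nu_i=0$ for $i\in I\setminus I(\x^*)$ gives the KKT stationarity condition over all of $I$, and complementary slackness $\nu_i g_i(\x^*)=0$ holds automatically in both regimes.

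The hard part will be the arc construction in the middle step. The implicit function theorem only corrects the equality constraints, and without the strict negativity $\nabla g_i(\x^*)\w^\top<0$ that MFCQ supplies for every active inequality, nothing would prevent an active $g_i$ from being violated to first order along $\gamma$. It is precisely this strict slack at each active inequality that makes MFCQ the right regularity condition here, and that lets the downstream Farkas argument close the loop.
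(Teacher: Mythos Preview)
The paper does not give its own proof of this theorem; it is quoted from the literature (Mangasarian--Fromovitz \cite{mf}; see also \cite{book}) and used as a black box in Section~3. So there is nothing to compare your argument against.

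That said, what you have written is the standard and correct derivation. The tilting $d_\varepsilon=d+\varepsilon\w$ together with the linear independence of $\{\nabla h_j(\x^*)\}_{j\in J}$ is exactly what makes the implicit-function-theorem arc construction go through, and your use of Farkas' lemma to pass from $\nabla f(\x^*)d^\top\g0$ on the linearization cone to the multiplier representation of $-\nabla f(\x^*)$ is legitimate because the dual cone is finitely generated and hence closed. One small point worth making explicit in a polished write-up: after the implicit function theorem produces the correction along the span of the $\nabla h_j(\x^*)$, you should check that this correction is $o(t)$ so that the initial velocity really is $d_\varepsilon$ and the first-order sign of $g_i(\gamma(t))$ for active $i$ is still governed by $\nabla g_i(\x^*)d_\varepsilon^\top<0$. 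This is routine, but it is the place where a careless version of the argument can slip.
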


\section{Proofs}

In this section, we prove Theorem~\ref{main2}.
We start by stating some results on inequalities.

\begin{lem}\label{ben1} {\rm (Bennet \cite{ben})} Suppose that $\al_1,\al_2,\delta_1,\delta_2\g0$, $d_1<a_1<a_2<d_2$,
$\al_1+\al_2=\delta_1+\delta_2$, and that $\al_1 a_1+\al_2 a_2=\delta_1 d_1+\delta_2 d_2$.
 If $\vf$ is a convex function, then $$\al_1\vf(a_1)+\al_2\vf(a_2)\lee\delta_1\vf(d_1)+\delta_2\vf(d_2).$$
\end{lem}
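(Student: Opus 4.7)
The plan is to express each of $a_1$ and $a_2$ as a convex combination of the two endpoints $d_1$ and $d_2$, which is possible precisely because of the bracketing $d_1<a_1<a_2<d_2$. Concretely, I would set
$$\lambda_i=\frac{d_2-a_i}{d_2-d_1}\in(0,1),\qquad i=1,2,$$
so that $a_i=\lambda_i d_1+(1-\lambda_i)d_2$. Convexity of $\vf$ then gives the pointwise upper bounds
$$\vf(a_i)\lee \lambda_i\vf(d_1)+(1-\lambda_i)\vf(d_2),\qquad i=1,2.$$

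Multiplying the $i$-th inequality by $\al_i$ and summing, everything reduces to identifying the resulting coefficients of $\vf(d_1)$ and $\vf(d_2)$ with $\delta_1$ and $\delta_2$. A direct computation using the explicit form of $\lambda_i$ yields
$$\al_1\lambda_1+\al_2\lambda_2=\frac{(\al_1+\al_2)d_2-(\al_1 a_1+\al_2 a_2)}{d_2-d_1},$$
and substituting the two hypotheses $\al_1+\al_2=\delta_1+\delta_2$ and $\al_1 a_1+\al_2 a_2=\delta_1 d_1+\delta_2 d_2$ collapses the numerator to $\delta_1(d_2-d_1)$, so $\al_1\lambda_1+\al_2\lambda_2=\delta_1$. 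The complementary identity $\al_1(1-\lambda_1)+\al_2(1-\lambda_2)=\delta_2$ follows at once from the mass condition, and the lemma drops out.

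I do not anticipate a substantive obstacle: the argument is a direct manifestation of the fact that on the real line any two-point measure supported in $[d_1,d_2]$ with prescribed total mass and first moment is dominated in the convex order by the two-point measure on the endpoints $\{d_1,d_2\}$ sharing those moments. The two equality constraints in the hypothesis encode exactly this matching of mass and mean, and the nonnegativity $\al_i,\delta_i\g0$ ensures the convex combinations involved are genuine.
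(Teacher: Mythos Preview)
Your argument is correct: writing each $a_i$ as the convex combination $\lambda_i d_1+(1-\lambda_i)d_2$ with $\lambda_i=(d_2-a_i)/(d_2-d_1)$, applying convexity, and then verifying that the two moment hypotheses force $\al_1\lambda_1+\al_2\lambda_2=\delta_1$ and $\al_1(1-\lambda_1)+\al_2(1-\lambda_2)=\delta_2$ is a clean and complete proof. The only point worth noting is that the paper does not supply its own proof of this lemma; it is stated as a quotation from Bennett~\cite{ben} and used as a black box, so there is no ``paper's proof'' to compare against. Your self-contained verification is therefore a genuine addition rather than a reproduction.
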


\begin{lem}\label{ben2} {\rm (Bennet \cite{ben})} Suppose that $\al_1,\al_2,\delta_1,\delta_2\g0$, $a_1<d_1<a_2<d_2$,
$\al_1+\al_2=\delta_1+\delta_2$, $\al_1 a_1+\al_2 a_2=\delta_1d_1+\delta_2 d_2$, and that $\al_1 a_1^2+\al_2 a_2^2\g\delta_1 d_1^2+\delta_2 d_2^2$.
 If $\vf$ is a concave and $\vf'$  a convex function, then $$\al_1\vf(a_1)+\al_2\vf(a_2)\lee\delta_1\vf(d_1)+\delta_2\vf(d_2).$$
\end{lem}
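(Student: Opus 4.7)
The plan is to reduce the claim to a single integral inequality via double integration by parts, and then exploit the shape conditions on $\vf$ together with the second-moment hypothesis to pin down the sign.

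I would form the signed measure $\sigma$ with mass $\al_1$ at $a_1$, $\al_2$ at $a_2$, $-\delta_1$ at $d_1$, and $-\delta_2$ at $d_2$, so that hypotheses on sums, first moments, and second moments read $\int d\sigma = 0$, $\int x\,d\sigma = 0$, and $\int x^2\,d\sigma \g 0$, while the conclusion becomes $\int \vf\,d\sigma \lee 0$. Setting $G(t) = \sigma((-\infty,t])$ and $H(t) = \int_{a_1}^{t} G(s)\,ds$, the mass and first-moment identities force $H(a_1) = H(d_2) = 0$, so two integrations by parts (legitimate because convexity of $\vf'$ makes $\vf'$ absolutely continuous) yield
\[
\int \vf\,d\sigma \;=\; \int_{a_1}^{d_2} H(t)\,\vf''(t)\,dt.
\]
Applied to the test function $x^2$, this same identity gives $\int_{a_1}^{d_2} H(t)\,dt = \tfrac{1}{2}\int x^2\,d\sigma \g 0$ by hypothesis~(iii).

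The geometric core is a sign analysis of $H$. Because $G$ is piecewise constant with values $\al_1$, $\al_1-\delta_1$, and $\al_1-\delta_1+\al_2 = \delta_2$ on $[a_1,d_1]$, $[d_1,a_2]$, $[a_2,d_2]$ respectively, $H$ is piecewise linear with slopes $\al_1$, $\al_1-\delta_1$, $\delta_2$. Since the outer slopes are nonnegative and $H$ vanishes at both endpoints, the middle slope must be nonpositive (strictly negative in any nondegenerate case). Thus $H$ rises from $0$ to a positive peak at $d_1$, decreases linearly through zero at some $c \in [d_1,a_2]$ to a nonpositive minimum at $a_2$, and then rises linearly back to $0$ at $d_2$. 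In particular, $H \g 0$ on $[a_1,c]$ and $H \lee 0$ on $[c,d_2]$.

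To finish, I would use that concavity of $\vf$ gives $\vf'' \lee 0$, while convexity of $\vf'$ gives $\vf''$ nondecreasing. Hence $\vf''(t) \lee \vf''(c)$ on $[a_1,c]$ and $\vf''(t) \g \vf''(c)$ on $[c,d_2]$, so the product $H(t)\bigl(\vf''(t)-\vf''(c)\bigr) \lee 0$ throughout $[a_1,d_2]$. Integrating and then combining with $\vf''(c) \lee 0$ and $\int H\,dt \g 0$ gives
\[
\int_{a_1}^{d_2} H(t)\,\vf''(t)\,dt \;\lee\; \vf''(c)\int_{a_1}^{d_2} H(t)\,dt \;\lee\; 0,
\]
which is the claimed inequality. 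The main obstacle I anticipate is the sign analysis of $H$: this is the one place where the precise interleaving $a_1 < d_1 < a_2 < d_2$ actually enters, and a handful of boundary situations (some $\al_i$ or $\delta_i$ vanishing, or equality cases in the ordering) will need separate inspection or a continuity argument. The regularity assumption $\vf\in C^2$ can be dropped by approximating $\vf$ uniformly on $[a_1,d_2]$ by a smooth concave function with convex derivative and passing to the limit.
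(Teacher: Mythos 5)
The paper does not prove this lemma at all: it is imported verbatim from Bennett \cite{ben} and used as a black box (in Case~3 of the proof of Theorem~\ref{ep}, with $\vf(x)=\ln x$). So there is no in-paper argument to compare against; what you have written is a self-contained proof, and after checking it I believe it is correct. The key identities hold: with your signed measure $\sigma$, the zero-mass and zero-first-moment conditions give $H(a_1)=H(d_2)=0$, the test function $x^2$ gives $\int H = \tfrac12\int x^2\,d\sigma \g 0$, and the sign analysis of the piecewise linear $H$ is right --- $H(d_1)=\al_1(d_1-a_1)\g0$ and $H(a_2)=-\delta_2(d_2-a_2)\lee0$ force a crossing $c\in[d_1,a_2]$ with $H\g0$ to its left and $H\lee0$ to its right, and this conclusion survives all the degenerate cases (vanishing $\al_i$ or $\delta_i$) without separate treatment, since the final chain $\int H\vf''\lee\vf''(c)\int H\lee0$ only needs the existence of such a $c$ together with $\vf''\lee0$ nondecreasing. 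This is essentially the classical "moment conditions plus sign-change counting" proof of higher-order convex ordering, and it is in the same spirit as Bennett's own majorization-theoretic treatment, so nothing is lost relative to the cited source; what your write-up buys the reader is that the paper's reliance on \cite{ben} could be made self-contained in half a page. The two points that genuinely need the care you flag are (a) the regularity reduction --- mollification preserves concavity of $\vf$ and convexity of $\vf'$ and converges uniformly on compacta, so passing to the limit in the four-point inequality is legitimate provided $\vf$ is defined on an open neighborhood of $[a_1,d_2]$, which holds in the paper's application with $\vf=\ln$ on $(0,\infty)$ --- and (b) the observation that "outer slopes nonnegative plus vanishing at both ends" by itself does not give the sign pattern; it is the explicit computation of $H(d_1)$ and $H(a_2)$ that does, and you should state those two values rather than argue from the middle slope.
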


\begin{lem}\label{2eq} Let $m,n,s,t$ be positive integers with $m+n=s+t$ and $a_1,a_2,x,y$ be reals with $0<x\lee a_1<a_2\lee y$. If $sx+ty=ma_1+na_2$ and $sx^2+ty^2=ma_1^2+na_2^2$,
then $m=s, n=t, x=a_1$, and $y=a_2$.
\end{lem}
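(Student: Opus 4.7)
\textbf{Plan for the proof of Lemma~\ref{2eq}.} The plan is to reduce the problem to an identity for the variance of a two-atom distribution and then exploit the inclusion $[a_1,a_2]\subseteq[x,y]$. Let $N=m+n=s+t$ and denote by $\mu$ the common value $(sx+ty)/N=(ma_1+na_2)/N$ of the two means (they agree by the first equation in the hypotheses). Since $s,t,m,n$ are positive integers, $\mu$ is a strict convex combination both of $\{x,y\}$ and of $\{a_1,a_2\}$; in particular $x<\mu<y$ and $a_1<\mu<a_2$.

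Next I would recast the second-moment hypothesis as equality of variances. From $sx+ty=N\mu$ one derives
$$\mu-x=\frac{t(y-x)}{N},\qquad y-\mu=\frac{s(y-x)}{N},$$
and a direct computation yields
$$\frac{sx^2+ty^2}{N}-\mu^2=\frac{st(y-x)^2}{N^2}=(y-\mu)(\mu-x).$$
The same identity applied to the quadruple $(a_1,a_2,m,n)$, combined with the second equality hypothesis $sx^2+ty^2=ma_1^2+na_2^2$, gives the key relation
$$(y-\mu)(\mu-x)=(a_2-\mu)(\mu-a_1).$$

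This is where the ordering assumption enters, and is the main (though mild) obstacle. From $0<x\lee a_1<a_2\lee y$ together with $a_1<\mu<a_2$ we obtain $y-\mu\g a_2-\mu>0$ and $\mu-x\g\mu-a_1>0$. The displayed equality above then forces both of these inequalities to be equalities, so $x=a_1$ and $y=a_2$.

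Finally, substituting $x=a_1$ and $y=a_2$ back into $sx+ty=ma_1+na_2$ and using $s+t=m+n$ produces $(s-m)(a_1-a_2)=0$; since $a_1\ne a_2$ we conclude $s=m$, and hence $t=n$.
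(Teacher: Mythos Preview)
Your proof is correct and takes a genuinely different route from the paper's. The paper argues by a case split on whether $s\g m$ or $s<m$: assuming $s\g m$ it writes $x=a_1+\alpha_1=a_2+\alpha_2$ and $y=a_2+\gamma$, expands $sx^2+ty^2$ as $ma_1^2+na_2^2$ plus a sum of manifestly nonnegative terms (squares and the cross term $2m\alpha_1(a_1-a_2)$, which is nonnegative because $\alpha_1\lee0$ and $a_1<a_2$), and concludes that each such term vanishes. Your argument instead interprets the hypotheses as saying that the two two-atom distributions $(x,y;s/N,t/N)$ and $(a_1,a_2;m/N,n/N)$ share both mean $\mu$ and variance, invokes the identity that a two-atom distribution with mean $\mu$ and support $\{x,y\}$ has variance exactly $(y-\mu)(\mu-x)$, and then uses the nesting $[a_1,a_2]\subseteq[x,y]$ together with $a_1<\mu<a_2$ to force the two intervals to coincide. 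This is cleaner: it avoids the case analysis, makes transparent why the ordering hypothesis is exactly what is needed, and the key identity is reusable. The paper's approach, by contrast, is more self-contained and purely algebraic, requiring no appeal to the variance formula.
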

\begin{proof}{First assume that $s\g m$. If we let
$\al_1:=x-a_1\lee0$, $\al_2:=x-a_2<0$, and $\gamma:=y-a_2\g0$,
then, by the assumption, $m\al_1+(s-m)\al_2+t\gamma=0.$
Now we have
\begin{align*}
    sx^2+ty^2&=m(a_1+\al_1)^2+(s-m)(a_2+\al_2)^2+t(a_2+\gamma)^2 \\
    &=ma_1^2+na_2^2+m\al^2+(s-m)\al_2^2+t\gamma^2+2m\al_1(a_1-a_2)+2a_2(m\al_1+(s-m)\al_2+t\gamma).
\end{align*}
By the assumption, it is necessary that $s=m$ and $\al_1=\gamma=0$ which implies the result. The case $s<m$ can be handled similarly.
}\end{proof}

Now we let   $$\p:=(\te_1,\ldots,\te_1,\te_2,\ldots,\te_2)\in\mathbb{R}^n,$$
such that $0<\te_1<\te_2$,  $\te_1$ is repeated $m_1$ times and $\te_2$ is repeated $m_2$ times.
In order to prove Theorem~\ref{main2}, we fix $p>0$ for the rest of the paper and
  find the global minima of the function  $$f(\x):=x_1^{-p}+\cdots+x_n^{-p},~~~ \x=(x_1,\ldots,x_n)\in\mathbb{R}^n$$ subject to the constraints:
\begin{align*}
    g(\x)&:=x_1+\cdots+x_n-m_1\te_1-m_2\te_2=0,\\
    h(\x)&:=m_1\te_1^2+m_2\te_2^2-x_1^2-\cdots-x_n^2\lee0,\\
    k(\x)&:=x_1\cdots x_n-\te_1^{m_1}\te_2^{m_2}\lee0,\\
    l_1(\x)&:=x_1-\te_1\lee0,\\
    l_2(\x)&:=\xi-x_1\lee0,\\
    m_i(\x)&:=x_1-x_i\lee0,~~\hbox{for $i=2,\ldots,n-1$},\\
    n_i(\x)&:=x_i-x_n\lee0,~~\hbox{for $i=2,\ldots,n-1$}.
\end{align*}
The positive number $\xi$ is chosen small enough so that it satisfies $\xi^{-p}>f(\p)$.

Hereafter we suppose that the vector $$\e=(e_1,\ldots,e_n)$$ is a local minimum of the above problem.

\begin{lem}\label{mfcq} If $l_2(\e)<0$, then $\e$ satisfies MFCQ.
\end{lem}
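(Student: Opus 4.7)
The plan is to construct an explicit direction vector $\w\in\mathbb{R}^n$ that certifies MFCQ at $\e$. Since $g$ is the only equality constraint and $\nabla g(\e)=(1,\ldots,1)\ne\mathbf{0}$, the linear-independence requirement is immediate, and the task reduces to producing $\w$ with $\sum_{i=1}^{n} w_i=0$ and $\nabla c(\e)\cdot\w^\top<0$ for every inequality constraint $c$ active at $\e$. The hypothesis $l_2(\e)<0$ enters at exactly this point: it eliminates the one constraint whose associated condition would read $w_1>0$, which would otherwise conflict with $w_1<0$ coming from a possibly active $l_1$.

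Before choosing $\w$, I would first record that $\e$ cannot be constant. Indeed, if $e_1=\cdots=e_n$, then $g(\e)=0$ forces the common value to be $\bar{e}:=(m_1\te_1+m_2\te_2)/n$, and the strict convexity of $t\mapsto t^2$ (applied to the two-point distribution on $\{\te_1,\te_2\}$ with weights $m_1,m_2$) yields $n\bar{e}^2<m_1\te_1^2+m_2\te_2^2$, violating $h(\e)\lee0$. Thus $e_1<e_n$, and in particular $\sum_i(e_i-\bar{e})^2>0$ while $\bar{e}\sum_i e_i^{-1}>n$ by the strict AM--HM inequality; both of these strict facts will feed directly into the verification below.

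With $\bar{e}$ as above I would then set
\[
w_i:=(e_i-\bar{e})+\varepsilon\Bigl(i-\tfrac{n+1}{2}\Bigr),\qquad i=1,\ldots,n,
\]
for a sufficiently small $\varepsilon>0$. All of the required conditions then follow by inspection: $\sum_i w_i=0$ because both summands have mean zero; the perturbation term is strictly increasing in $i$, so $w_1<w_i$ for every $i\g2$ and $w_i<w_n$ for every $i\lee n-1$, which handles all active $m_i$ and $n_i$; the inequalities $e_1-\bar{e}<0$ and $1-(n+1)/2<0$ together give $w_1<0$, handling $l_1$; and the expansions $\sum_i e_i w_i=\sum_i(e_i-\bar{e})^2+O(\varepsilon)$ and $\sum_i w_i/e_i=n-\bar{e}\sum_i e_i^{-1}+O(\varepsilon)$ retain the strict signs established above once $\varepsilon$ is small, handling $h$ and $k$.

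The only real obstacle is tie-breaking: the cleaner choice $w_i=e_i-\bar{e}$ would fail as soon as the extremal value $e_1$ or $e_n$ is attained with multiplicity, because then equal $e_i$'s yield equal $w_i$'s and the $m_i$ or $n_i$ gradient conditions would hold only with equality rather than strict inequality. The $\varepsilon$-perturbation by $i-(n+1)/2$ is introduced precisely to destroy these ties while preserving $\sum_i w_i=0$, and the $O(\varepsilon)$ corrections to the $h$- and $k$-conditions are absorbed by taking $\varepsilon$ arbitrarily small.
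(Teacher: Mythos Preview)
Your argument is correct. The direction $w_i=(e_i-\bar e)+\varepsilon\bigl(i-\tfrac{n+1}{2}\bigr)$ does certify MFCQ for all sufficiently small $\varepsilon>0$: the zero-sum and $l_1$ conditions are clear, the $h$- and $k$-conditions reduce to the strict positivity of $\sum(e_i-\bar e)^2$ and the strict AM--HM inequality (both available because you first rule out the constant vector via $h(\e)\le0$), and the active $m_i$, $n_i$ conditions follow since on those indices the base terms $e_i-\bar e$ coincide with $e_1-\bar e$ or $e_n-\bar e$ and the perturbation breaks the tie. One minor expository point: your sentence ``the perturbation term is strictly increasing in $i$, so $w_1<w_i$ \ldots'' tacitly also uses the feasibility inequalities $e_1\le e_i\le e_n$; you might say so explicitly, though for the active constraints the perturbation alone already suffices.

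The paper takes a different, more combinatorial route. After noting $e_1<e_n$ (which you prove and the paper merely asserts), it lets $t$ denote the multiplicity of $e_n$ and sets $\w=(-a,0,\ldots,0,b,\ldots,b,c)$ with $b$ repeated $t-1$ times, where $a,b,c>0$ satisfy $a=(t-1)b+c$ and $c>b$. Then $\sum w_i=0$ is automatic, $\sum e_iw_i=a(e_n-e_1)>0$ and $\sum w_i/e_i=a(e_n^{-1}-e_1^{-1})<0$ come out in closed form rather than up to an $O(\varepsilon)$ correction, and the $l_1,m_i,n_i$ conditions are immediate from the explicit entries. Your construction is more uniform and avoids the WLOG reordering of $e_2,\ldots,e_{n-1}$; the paper's avoids the limiting argument in $\varepsilon$ and makes the $h$- and $k$-signs exact.
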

\begin{proof}{With no loss of generality, assume that $e_1\lee e_2\lee\cdots\lee e_n$. We have also $e_1<e_n$.
  The only equality constraint is $g(\x)=0$ for which $\nabla g(\x)$ is the all one vector. So if $\nabla g(\e)\w^\top=0$, then the components of $\w$ must sum up to zero. We also observe that $m_i(\e)=0$ and $n_i(\e)=0$ cannot simultaneously occur for any
$i=2,\ldots,n$.
Assume that  $t$ of $e_i$ are equal to $e_n$ for some $t\g1$ and $a,b,c$ be positive numbers with $a=(t-1)b+c$ and $c>b$.
By choosing $\w=(-a,0,\ldots,0,b,\ldots,b,c)$, with $b$ repeated  $t-1$ times, we see that the for all
$${\bf y}\in\{\nabla h(\e),\nabla k(\e),\nabla l_1(\e)\}\cup\{\nabla m_i(\e)\mid i=2,\ldots,n-1-t \}\cup\{\nabla n_i(\e)\mid i=n-2-t,\ldots,n-1 \}$$
 we have ${\bf y}\w^\top<0$. Hence MFCQ conditions are satisfied for $\e$.
}\end{proof}

\begin{thm}\label{ep} If $\e$ is a global minimum, then it must be a permutation of $\p$.
\end{thm}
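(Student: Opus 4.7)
The strategy is to apply the Karush--Kuhn--Tucker conditions at $\e$ and exploit the resulting stationarity relations to constrain its structure. Since $\p$ satisfies every constraint of the optimization problem and $\xi$ was chosen so that $\xi^{-p}>f(\p)$, the assumption that $\e$ is a global minimum gives $f(\e)\lee f(\p)<\xi^{-p}$, forcing $e_1>\xi$ and hence $l_2(\e)<0$. By Lemma~\ref{mfcq}, MFCQ holds at $\e$, so KKT multipliers exist: $\la$ for the equality $g$ and nonnegative $\al,\be,\gamma_1,\mu_i,\eta_i$ for the inequalities $h,k,l_1,m_i,n_i$ respectively (the multiplier for $l_2$ vanishes by complementary slackness).

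Setting $P=\prod_i e_i$ and
\[
\phi(x):=-p\,x^{-p-1}+\la-2\al x+\be P/x,
\]
the stationarity equation for each middle index $j\in\{2,\ldots,n-1\}$ reads $\phi(e_j)=\mu_j-\eta_j$, while those at $j=1$ and $j=n$ give $\phi(e_1)+\gamma_1+\sum_i\mu_i=0$ and $\phi(e_n)=\sum_i\eta_i$ respectively. Complementary slackness forces $\mu_j=0$ unless $e_j=e_1$ and $\eta_j=0$ unless $e_j=e_n$. A short calculation then shows that $\phi(e_j)=0$ for every $j$ with $e_1<e_j<e_n$, that $\phi(e_n)=0$, and that $\phi(e_1)=-\gamma_1\lee 0$, with equality unless $e_1=\te_1$ has multiplicity one.

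Consider $\psi(x):=x\phi(x)=-p\,x^{-p}+\la x-2\al x^2+\be P$. Since $\al\g 0$ and $p>0$, one has $\psi''(x)=-p^2(p+1)x^{-p-2}-4\al<0$ on $(0,\infty)$, so $\psi$ is strictly concave and has at most two positive zeros; hence so does $\phi$. Combined with the previous paragraph, the components of $\e$ take at most three distinct values: up to two zeros of $\phi$, together possibly with the value $\te_1$, occurring when $e_1=\te_1$ sits strictly below both zeros.

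The argument concludes by case analysis. The one-distinct-value case is excluded by AM--GM applied to the product constraint, which would force $\te_1=\te_2$. For two distinct values $v_1<v_2$ with multiplicities $s_1,s_2$, one subdivides according to the placement of $v_1,v_2$ relative to $\te_1,\te_2$: the orderings $v_1<\te_1<\te_2<v_2$ and $v_1<\te_1<v_2<\te_2$ are eliminated by Lemma~\ref{ben1} and Lemma~\ref{ben2} respectively, applied to $\vf(x)=x^{-p}$ (strictly convex, with $-\vf$ having convex derivative), each yielding a strict inequality incompatible with $f(\e)\lee f(\p)$; the boundary sub-cases $v_1=\te_1$ or $v_2=\te_2$ are handled by weighted AM--GM on the product constraint, forcing either $\te_1=\te_2$ or the equalities of Lemma~\ref{2eq}, whence $\e=\p$. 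The principal obstacle is the three-distinct-values case, in which neither Bennet lemma applies directly to $(v_1,v_2)$ against $(\te_1,\te_2)$; here one must combine the strict concavity of $\psi$ with the sum-of-squares and product constraints in a more delicate argument to force a contradiction.
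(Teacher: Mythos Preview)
Your KKT setup and the reduction to at most three distinct values are correct and track the paper's argument. The genuine gap is that you do not prove the three-value case: you write that ``one must combine the strict concavity of $\psi$ with the sum-of-squares and product constraints in a more delicate argument to force a contradiction,'' but no such argument is supplied. This is exactly the heart of the proof---Cases~3 and~4 in the paper, where $e_1=\te_1$ has multiplicity one and the remaining coordinates take two further values $e_2<e_n$ with $\te_1<e_2$. The paper resolves it not by comparing $f(\e)$ with $f(\p)$ through $x^{-p}$, but by showing that the \emph{product} constraint is violated: after checking $e_n>\te_2$, one applies Lemma~\ref{ben2} with $\vf(x)=\ln x$ (concave with convex derivative) to the $n-1$ coordinates $e_2,\ldots,e_n$ against $(m_1-1)$ copies of $\te_1$ and $m_2$ copies of $\te_2$, obtaining $\prod_i e_i>\te_1^{m_1}\te_2^{m_2}$, which contradicts $k(\e)\lee0$. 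Your exclusive focus on $\vf(x)=x^{-p}$ misses this switch to the logarithm, and the vague appeal to ``strict concavity of $\psi$'' does not substitute for it.

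As a secondary remark, your two-value case also diverges from the paper. You argue by applying the Bennett lemmas to $\vf(x)=x^{-p}$ and comparing $f(\e)$ with $f(\p)$ directly; the paper instead observes that the sum-of-squares multiplier must be strictly positive (otherwise the stationarity polynomial is monotone and has a single positive zero, forcing $e_1=e_n$), so $h(\e)=0$ exactly, and then Lemma~\ref{2eq} immediately gives $\e=\p$. Your route can be completed, but the boundary sub-cases you wave at (``weighted AM--GM on the product constraint'') require more care than you indicate, and the paper's activation-of-constraints argument is both shorter and what actually feeds into the three-value case you left open.
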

\begin{proof}{Assume that $f(\e)\lee f(\p)$ and $e_1\lee e_2\lee\cdots\lee e_n$. We show that $\e$ must be equal to $\p$.
By the choice of $\xi$, $l_2(\e)<0$ and so by Lemma~\ref{mfcq}, $\e$ satisfies KKT conditions, namely
\begin{equation}\label{nabla}
   \nabla f(\e)+\nu\nabla g(\e)+\la\nabla h(\e)+\rho\nabla k(\e)+\sum_{i=1}^2\eta_i\nabla l_i(\e)+
\sum_{i=2}^{n-1}\left(\al_i\nabla m_i(\e)+\be_i\nabla n_i(\e)\right)=0,
\end{equation}
\begin{align}
&~~~~~~~~~~e_1+\cdots+e_n-m_1\te_1-m_2\te_2=0,\\
\la&\g0,~~~\la(m_1\te_1^2+m_2\te_2^2-e_1^2-\cdots-e_9^2)=0,\label{lambda}\\
\rho&\g0,~~~\rho(e_1\cdots e_n-\te_1^{m_1}\te_2^{m_2})=0,\label{prod}\\
\eta_1&\g0,~~~\eta_1(e_1-\te_1)=0,\label{eta1}\\
\eta_2&\g0,~~~\eta_2(\xi-e_1)=0,\label{eta3}\\
\al_i&\g0,~~~\al_i(e_1-e_i)=0,~~ \hbox{for $i=2,\ldots,n-1$},\label{alpha}\\
\be_i&\g0,~~~\be_i(e_i-e_n)=0,~~ \hbox{for $i=2,\ldots,n-1$}.\label{beta}
\end{align}
 Since $l_2(\e)<0$, we have $\eta_2=0$. If we let $D=\prod_{i=1}^ne_i$, then  (\ref{nabla}) can be written as
\begin{align}
  &-pe_1^{-p-1}+\nu-2\la e_1+\rho\frac{D}{e_1}+\eta_1+\al_2+\cdots+\al_{n-1}=0,\label{e1}\\
  &-pe_i^{-p-1}+\nu-2\la e_i+\rho\frac{D}{e_i}-\al_i+\be_i=0,~~ \hbox{for $i=2,\ldots,n-1$},\nonumber\\
   &-pe_n^{-p-1}+\nu-2\la e_n+\rho\frac{D}{e_n}-\be_2-\cdots-\be_{n-1}=0.\label{e9}
\end{align}
Assume that  $r$ of $e_i$ are equal to $e_1$, $t$ of $e_i$ are equal to $e_n$, and $s$ of them are between $e_1$ and $e_n$.
We consider four cases according to whether $r$ and $t$ are equal to $1$ or not.

\noi{\bf Case 1.} $r\g2$ and $t\g2$. We have $e_2=e_1$. This implies that $e_2<e_n$ and so $\be_2=0$ and
$-\al_2=\eta_1+\al_2+\cdots+\al_{n-1}$. Since $\eta_1,\al_i\g0$ it follows that $\eta_1=\al_2=\cdots=\al_{n-1}=0$.
Similarly $\be_2=\cdots=\be_{n-1}=0$.
It turns out that each $e_i$ must be a zero of the function
$$y(x):=-p+\nu x^{p+1}-2\la x^{p+2}+\rho D x^p.$$
It is easily seen that $y'(x)$ has at most one positive zero and thus $y(x)$ has at most two positive zeros.
Therefore,  each $e_i$ is  equal to either $e_1$ or $e_n$.
In case $\la=0$, $y(x)$ becomes a monotone function and thus it has at most one zero implying
 that $e_1=\cdots=e_n$ which is impossible. Therefore, $\la>0$ and so by (\ref{lambda}),
$$re_1^2+te_n^2=m_1\te_1^2+m_2\te_2^2.$$
Now Lemma~\ref{2eq} implies that $e_1=\te_1$, $e_n=\te_2$, $r=m_1$ and $t=m_2$. Therefore, $\e=\p$.

\noi{\bf Case 2.} $r\g2$ and $t=1$. Since $t=1$, by (\ref{beta}), all $\be_i$ are zero. Since $r\g2$, as above,
$\eta_1=\al_2=\cdots=\al_{n-1}=0$. It follows that all $e_i$ are zeros of $y(x)$ and so $e_1=\cdots=e_{n-1}$.
We have necessarily  $e_n\g\te_2$. Now, if $m_2>1$, then by Lemma~\ref{2eq}, we arrive at a contradiction and if $m_2=1$, then $\e=\p$.

\noi{\bf Case 3.} $r=1$ and $t\g2$. Since $r=1$, by (\ref{alpha}), all $\al_i$ are zero and from $t\g2$ it follows that $\be_2=\cdots=\be_{n-1}=0$.
 If $\eta_1=0$, then all $e_i$ admit at most two different values which is only possible when $r=m_1$ and $t=m_2$ as in Case~2.
Thus $\eta_1>0$ and so $e_1=\te_1$.
The rest of $e_i$ are zeros of $y(x)$ and so they are equal to either $e_2$ or $e_n$. Clearly $e_2>\te_1$. From lemma~\ref{2eq} it follows that  $e_n > \te_2$.
Now we apply Lemma~\ref{ben2} with the function $\vf(x)=\ln x$. It turns out that $e_1\cdots e_n>\te_1^{m_1}\te_2^{m_2}$, a contradiction.

\noi{\bf Case 4.} $r=t=1$. As above, we have  all $\al_i$ and $\be_i$ equal to zero and so all of $e_2,\ldots,e_{n}$ are zeros of $y(x)$.
From (\ref{e1})  it is clear that $y(e_1)\lee0$. So if we denote the zeros of $y(x)$ by $y_1$ and $y_2$ with $y_1\lee y_2$, then we have $e_1<y_1<e_n= y_2$ implying that $e_2=\cdots=e_{n-1}=y_1$.
If $\eta_1=0$, then $e_1$ must be a zero of $y(x)$ and it has to be equal to $y_1$ which is not possible.
So $\eta_1>0$ implying that $e_1=\te_1$. This yields the same result as in Case~3.

Consequently, we found that all the cases lead to a contradiction except for the case $r=m_1$ and $t=m_2$ which in turn implies that $\e=\p$.
The proof is now complete.
}\end{proof}

\noi{\bf Example.}
The celebrated Petersen graph has many fascinating properties. Concerning optimality, this distinguished graph shows another interesting and unique character.
In \cite{fgkt}, an algorithm is developed which searches for optimal designs within $\D_{v,b,k}$.
Implementing that algorithm, we looked for the optimal designs in the family of graphs with 10 vertices and 15 edges.
As result, the Petersen graph was pumped out as the A-, D-, and E-optimal design in that family. This was in fact one of our motivations for this work.
Now, as a  demonstration of Theorem~\ref{main}, we prove the general optimality property that
 {\em for all $p>0$, the Petersen graph is the unique $\Phi_p$-optimal and also D- and E-optimal graph among all  connected simple graphs on $10$ vertices and $15$ edges.}
This  follows from Theorem~\ref{main} and Proposition~\ref{te} below. Note that, since we restrict to simple graphs, that the Petersen graph minimizes $\tr(C^2)$ is trivial. Meanwhile, the uniqueness of Petersen graph as D- and E-optimal design is concluded from the equality  cases of Proposition~\ref{te}. (We remark that the E-optimality of Petersen graph is a special case of Theorem~3.3 of \cite{ch2012}. Nonetheless we include the short reasoning for the sake of completeness.) In passing we mention that in the case of graphs,
 the information matrix is
half its Laplacian matrix so one can consider the eigenvalues of Laplacian matrix for studying optimality. Recall that the Laplacian eigenvalues of Petersen graph are $\{5^4,2^5,0\}$. The Petersen graph is also uniquely determined by its Laplacian eigenvalues.

\begin{pro}\label{te} Let $G$ be a connected graph with $10$ vertices, $15$ edges, and denote the eigenvalues of Laplacian matrix of $G$ by $\mu_1\g\cdots\g\mu_9>\mu_{10}=0$. Then
 \begin{itemize}
   \item[\rm(i)] $\mu_9\lee2$ and the equality holds only for Petersen graph,
   \item[\rm(ii)] $\prod_{i=1}^9\mu_i\lee2\cdot10^4$  and the equality holds only for Petersen graph.
 \end{itemize}
\end{pro}
\begin{proof}{Let $d_1\g\cdots\g d_{10}$ be the degree sequence of $G$.
 It is known that for non-complete graphs, the smallest nonzero eigenvalue of Laplacian does not exceed the minimum degree (see, e.g., \cite[p.~198]{crs}).
 Hence $\mu_9\lee d_{10}$.
It follows that if $G$ is not regular, then $\mu_9\lee2$.
So we may assume that $G$ is regular. It is a well known fact that there are exactly 21 3-regular graphs on 10 vertices out of which 19 are connected.
 By inspecting the table of spectra of small graphs \cite{cds}, one can verify that the 19 3-regular graphs on 10 vertices satisfy (i).
There are exactly $112,618$ connected graphs on 10 vertices and 15 edges which can be extracted from the McKay's database on small graphs \cite{mc}.
By a simple computation one can verify (ii) and also the equality case of (i).
}\end{proof}

\section*{Acknowledgments}
We thank the referee for several fruitful comments.
The research of the second author was in part supported by a grant from IPM (No. 91050114).


\begin{thebibliography}{MM}

\bibitem{b} S. Bagchi,  On the optimality of a class of designs with three concurrences, {\em Linear Algebra Appl.} {\bf417} (2006),  8--30.

\bibitem{bb} B. Bagchi and S. Bagchi, Optimality of partial geometric designs, {\em Ann. Statist.} {\bf29} (2001), 577--594.

\bibitem{bc} R.A. Bailey and P.J. Cameron, Combinatorics of optimal designs, in: {\em Surveys in Combinatorics 2009}
(ed. S. Huczynska, J.D. Mitchell, and C.M. Roney-Dougal), London Math. Soc. Lecture Notes 365, Cambridge
University Press 2009, pp. 19--73.

\bibitem{bmm} R.A. Bailey, H. Monod, and J. P. Morgan, Construction and optimality of affine-resolvable
designs, {\em Biometrika} {\bf82} (1995), 187--200.

\bibitem{ben} G. Bennett, Meaningful sequences and the theory of majorization, {\em Houston J. Math.} {\bf35} (2009), 573--589.

\bibitem{bbs} R.C. Bose, W.G. Bridges, and M.S. Shrikhande, Partial geometric designs and two-class partially balanced
designs, {\em Discrete Math.} {\bf21} (1978), 97--101.

\bibitem{bss} R.C. Bose, S.S. Shrikhande, and N.M. Singhi, Edge regular multigraphs and partial geometric designs with an application to the embedding of quasi-regular designs, in: {\em Colloquio Internazionale sulle Teorie Combinatorie (Rome, 1973), Tomo I}, 49--81. Atti dei Convegni Lincei, 17, Accad. Naz. Lincei, Rome, 1976.

\bibitem{cam} P.J. Cameron, Research problems from the 19th British Combinatorial Conference, {\em Discrete Math.} {\bf293} (2005), 313--320.

\bibitem{ch78} C.-S. Cheng, Optimality of certain asymmetrical experimental designs, {\em Ann. Statist.} {\bf6} (1978), 1239--1261.

\bibitem{ch80} C.-S. Cheng, On the E-optimality of some block designs, {\em J. Roy. Statist. Soc. Ser. B} {\bf42} (1980),  199--204.
\bibitem{ch87} C.-S. Cheng, An optimization problem with applications to optimal design theory, {\em Ann. Statist.} {\bf15} (1987),  712--723.

\bibitem{ch2012} C.-S. Cheng, A note on the E-optimality of regular line graph designs, {\em J. Stat. Theory Pract.} {\bf6} (2012), 162--168.

\bibitem{cb} C.-S. Cheng and R.A. Bailey, Optimality of some two-associate-class partially balanced incomplete block designs, {\em Ann. Statist.} {\bf19} (1991), 1667--1671.


\bibitem{cds} D.M. Cvetkovi\'c, M. Doob, and H. Sachs, {\em Spectra of Graphs, Theory
and Applications}, Third Edition, Johann Ambrosius Barth, Heidelberg, 1995.

\bibitem{crs} D.M. Cvetkovi\'c, P. Rowlinson, and S.K. Simi\'c, {\em An Introduction to the Theory of Graph Spectra}, Cambridge University Press, Cambridge, 2010.

\bibitem{ds} E.R. van Dam and E. Spence, Combinatorial designs with two singular
values---I: uniform multiplicative designs, {\em J. Combin. Theory Ser. A} {\bf107} (2004), 127--142.

\bibitem{ds2} E.R. van Dam and E. Spence, Combinatorial designs with two singular values II. Partial geometric designs, {\em Linear Algebra Appl.} {\bf396} (2005), 303--316.

\bibitem{fgkt} M.R. Faghihi, E. Ghorbani, G.B. Khosrovshahi, and S. Tat, On $\Phi_{p}$-optimality of incomplete block designs: An algorithm, submitted.

\bibitem{k} J. Kiefer, Construction and optimality of generalized Youden designs, in:
{\em A Survey of Statistical Design and Linear Models} (ed. J.N. Srivastava), North-Holland, Amsterdam (1975), pp. 333--353.

\bibitem{j0} M. Jacroux, On the D-optimality of group divisible designs, {\em J. Statist. Plann. Inference} {\bf9} (1984),  119--129.
\bibitem{j} M. Jacroux, Some sufficient conditions for the type 1 optimality of block designs, {\em J. Statist. Plann. Inference} {\bf11} (1985), 385--396.

\bibitem{mf} O.L. Mangasarian and S. Fromovitz, The Fritz John necessary optimality conditions in the presence of equality and inequality constraints,
{\em J. Math. Anal. Appl.} {\bf17} (1967), 37--47.

\bibitem{mc} B. McKay, Combinatorial Data, {\tt http://cs.anu.edu.au/{\small$\sim$}bdm/data/}.

\bibitem{book} J. Nocedal  and S.J. Wright, {\em Numerical Optimization}, Second Edition, Springer, New York, 2006.

\bibitem{st} A.P. Street and D.J. Street, {\em Combinatorics of Experimental Design}, Clarendon Press, Oxford, 1987.

\bibitem{t} K. Takeuchi, On the optimality of certain type of PBIB designs, {\em Rep. Statist. Appl. Res. Un. Japan. Sci. Engrs.} {\bf8} (1961), 140--145.
\end{thebibliography}
\end{document}